\documentclass[11pt,a4paper]{article}

\usepackage{a4wide}
\usepackage{graphicx}
\usepackage{latexsym}
\usepackage{epsfig}
\usepackage{amsmath} 
\usepackage{amssymb}
\usepackage{amstext}
\usepackage{amsgen}
\usepackage{amsxtra}
\usepackage{amsgen}
\usepackage{amsthm}
\usepackage{color}
\usepackage{enumitem}
\usepackage[algo2e,ruled,noend]{algorithm2e}
\usepackage[backref,colorlinks,linkcolor=red,anchorcolor=green,citecolor=blue]{hyperref}

\usepackage{subfigure}

\def\eps{\varepsilon}
\SetKwInput{kwGiven}{Given}
\SetKwFor{While}{while}{}{end while}%

\def\RR{\mathbb{R}}

\newtheorem{thm}{Theorem}[section]

\newtheorem{lemma}[thm]{Lemma}

\newtheorem{rem}[thm]{Remark}

\theoremstyle{definition}

\newtheorem{definition}[thm]{Definition}

\newcounter{fpcounter}
\renewcommand{\thefpcounter}{\Roman{fpcounter}}

\newenvironment{fp}[2]{%
  \par\vspace*{1em}
\refstepcounter{fpcounter}%
\label{#1}%
\noindent\textbf{Problem~\thefpcounter:}%
}%
{\par\vspace*{1em}}%

\numberwithin{equation}{section}

\newcommand{\intL}{\int_{-L}^{p(\eps)}}
\newcommand{\intT}{\int_\eps^T} 

\begin{document}

\title{Data assimilation in price formation}

\author{Martin Burger\thanks{Department Mathematik, Friedrich-Alexander Universit\"at Erlangen-N\"urnberg, Cauerstrasse 11, 91058 Erlangen. e-mail:martin.burger@fau.de} \and Jan-Frederik Pietschmann\thanks{Technische Universit\"at Chemnitz, Fakult\"at f\"ur Mathematik, Reichenhainer Str. 41, 09126 Chemnitz Germany. jfpietschmann@math.tu-chemnitz.de} \and Marie-Therese Wolfram\thanks{Mathematics Institute, Warwick University, CV4 7AL Coventry, UK and Radon Institute for Computational and Applied Mathematics, Altenbergerstr. 69, 4040 Linz, Austria. e-mail: m.wolfram@warwick.ac.uk}  }

\maketitle

\vspace*{-12pt}

\begin{abstract}
We consider the problem of estimating the density of buyers and vendors in a nonlinear parabolic price formation model using measurements of the price and the transaction rate. 
Our approach is based on a work by Puel et al., see \cite{Puel2002}, and results in a optimal control problem. We analyse this problems and provide stability estimates for the 
controls as well as the unknown density in the presence of measurement errors. Our analytic findings are supported with numerical experiments.
\end{abstract}

\section{Introduction}\label{s:intro}

\noindent In this paper we use techniques developed in the field of data assimilation to predict the dynamics of a nonlinear parabolic free boundary price formation model proposed by Lasry \& Lions in \cite{LL2007}. The Lasry-Lions (LL) model describes the price evolution of a single good traded between a large group of buyers and a large group of vendors. The price
enters as a free boundary, at which trading takes place. After the realisation of a transaction, buyers and vendors immediately sell or rebuy the good at a shifted price. 
The shift in the price is due to the previously paid constant transaction costs. The situation detailed above can be described by the following nonlinear parabolic partial differential equation
\begin{subequations}\label{e:ll}
\begin{align}
&\partial_t f - \frac{\sigma^2}{2}\partial_{xx} f = \Lambda (t) (\delta_{p(t)-a} - \delta_{p(t)+a}),\; x \in \Omega, t > 0,\\
&\Lambda (t) = - \frac{\sigma^2}{2} \partial_x f(p(t),t), ~ f(p(t),t) = 0,\\ 
&f(x,0) = f_0(x),\,p(0) = p_0.
\end{align}
\end{subequations}		
The positive part $f^+ = \max(f,0)$ of the function $f = f(x,t)$ corresponds to the distribution of buyers over the price $x \in \Omega$, the negative part $f^- = \min(f,0)$ to the 
is the vendor distribution over the price. The free boundary $p=p(t)$ corresponds to the price where $f(\cdot,t) = 0$, the function $\Lambda$ to the total number of
transactions executed at that price. The immediate placement and execution of new bids and orders after the trading event are modelled by the Delta Diracs at the shifted
prices $p(t)+a$ and $p(t)-a$, where $a \in \mathbb{R}^+$ denotes the transaction costs. Random changes in the buyer and vendor distribution are included by a Laplacian with constant diffusivity
$\sigma \in \mathbb{R}^+$. We assume that the initial distribution $f_0$ satisfies:
\begin{align}\label{e:compatf0}
 f_0(p_0) = 0,\; f_0(x) > 0 \text{ for }x < p_0\text{ and }f_0(x) < 0\text{ for }x > p_0,\text{ a.e. in }\Omega
\end{align}
and set $\frac{\sigma^2}{2} = 1$. System \eqref{e:ll} can be posed on the positive real line $\Omega = \mathbb{R^+}$ or a bounded interval $\Omega = [0, x_{\max}]$, where
$x_{\max}$ denotes the maximum price. We will consider \eqref{e:ll} on the bounded interval only and impose homogeneous Neumann boundary conditions
\begin{align}\label{eq:neumannbc}
 \partial_x f = 0\text{ on } \partial\Omega
\end{align}
to ensure that the total number of buyers and vendors is constant in time. \\
For convenience, we assume the initial price $p_0$ is normalized to $0$ and only consider its relative change. Hence we work on the shifted domain $[-L,L]$, where $L = \frac{x_{\max}}{2}$. 
Altogether we will consider \eqref{e:ll} with boundary condition \eqref{eq:neumannbc} on $\Omega = [-L,L]$ throughout this manuscript.\\

The LL model \eqref{e:ll} was analysed in a series of papers, cf. \cite{GONZALEZ20113269, MMPW2009, CGGK2009, CMP2011, CMW2011}. Most available results are based on a nonlinear transformation of \eqref{e:ll}, which transforms the problem to the heat equation with nonlinear boundary conditions. This connection provides the main analytical ingredients to study existence and long time behaviour of solutions to \eqref{e:ll}. Lasry and Lions introduced the model on the macroscopic level only, a more detailed microscopic interpretation of the trading process and the respective limit as the number of buyers and vendors tend to infinity was missing. This connection was established by Burger et al., who proved that the
original LL model can be derived from a Boltzmann type model as the number of transactions tends to infinity, see \cite{BCMW2013}. In their approach trading events between buyers and vendors are modelled by ``collisions'', which can also be used to describe price dynamics in case of more general trading rules. The connection
between the Boltzmann-type price formation model and the LL model \eqref{e:ll} was further investigated  in different asymptotic limits in \cite{BCMW2014}.
The LL and Boltzmann-type price formation models are appealing in many respects, especially in terms of analytical tractability. However the resulting price process is deterministic and 
does not give any insights into connections between transactions rates, order flows or price volatility. Markowich et al., \cite{MTW2016} considered a stochastic extensions of 
the original LL model. However this extension did not give realistic price dynamics either. Very recently Cont and M\"uller \cite{CM2019} proposed a stochastic partial differential 
equation with multiplicative noise, which reproduces statistical properties of real price dynamics.\\

In this paper we focus on the inverse problem of determining the buyer-vendor distribution given measurements of the price and the transaction rate on a time interval $[0,T]$. 
This distribution can then be used as an initial value and thus allows us to predict price and transaction rate for $t > T$. More specifically we will investigate the question
\begin{fp}{p:inv}{} 
 Given measurements of the price $p(t)$ and the transaction rate $\Lambda(t)$ in some time interval $[0,T]$, is it possible to predict the price for times $t>T$ ? 
\end{fp}

\noindent Our approach is based on an optimal control approach proposed by J-P. Puel, see \cite{Puel2002,Puel2009}. It is based on a duality argument, which allows to reconstruct the distribution
$f$ at the final time $T$. This is in contrast to standard data assimilation where one tries to recover the initial datum $f_0(x)$. We adopt the strategy of Puel et al. and 
use duality estimates to compute 
linear functionals of $f(T,x)$.  These functionals involve the solution of optimal boundary control problems with PDE constraints. Optimal boundary control problems are well studied in the literature, see e.g. \cite{Lions1992,troltzsch2010optimal,hinze2008optimization}. We will make use 
of an exact null controllability result for parabolic boundary control problems shown in \cite{Chae1996}. Its proof is based on Carleman estimates, a technique commonly used to 
derive exact controllability results (and also uniqueness for inverse problems), see \cite{yamamoto2009carleman,klibanov1992} for details.  A possible numerical realisation of Puel's strategy was presented in 
\cite{Clason2009}.

 Our contributions to the subject of optimal control for parabolic free boundary problems and data assimilation in price formation models are the following:
 \begin{itemize}
 \item We present the first approach to reconstruct the buyer- and vendor distribution from measurements of price and transaction rate (to the author's knowledge).
 \item We generalise the data assimilation approach of Puel et al., see \cite{Puel2002}, to free boundary value problems and evolving domains.
 \item We provide stability estimates, which give novel insights into the influence of measurement errors on the price dynamics.
 \item We propose a computational strategy to implement the developed framework numerically.
\end{itemize}
\noindent This paper is organized as follows: The proposed framework is based on several analytic results, which will be presented in Section \ref{sec:direct}.
 The data assimilation problem itself
is discussed in Section \ref{sec:da}. Section \ref{sec:stability} is devoted to stability in the presence of measurement errors and we conclude by presenting numerical experiments in Section \ref{sec:numerics}.

\section{Preliminary results}\label{sec:direct}

In this section we provide analytic tools and results of the forward problem and define the respective adjoint problem, which will be used in the optimal control approach. \\
The presented results rely on the following assumptions:
\begin{enumerate}[label=(A\arabic*)]
\item  $ f_0(p_0) = 0,\; f_0(x) > 0 \text{ for }x < p_0\text{ and }f_0(x) < 0\text{ for }x > p_0$.\label{a:compatf0}
\item  For every $t \in [0,T]$, there exists a constant $\underline{p}>a$ such that $-L + \underline{p} \le p(t) \le L - \underline{p}$.\label{a:pbounds}
 \end{enumerate}
Assumption \ref{a:compatf0} is the necessary compatibility condition for the initial datum $f_0$ (which we already stated in \eqref{e:compatf0}), while \ref{a:pbounds} ensures that the price stays sufficiently far away from the interval boundaries. Note that the restriction on $p(t)$ is not severe in the context of inverse problems: Since we will assume later on that we know measurements of $p(t)$ in some time interval $[0,T]$, we can always chose the domain size $L$ (within realistic bounds) such that the condition $p(t) \in (-L + a, L - a)$ is satisfied. As $p(t)$ is continuous, we also know it will stay in $(-L+a,L-a)$ for some time so that it is safe to predict for $t>T$.\\

\subsection{Nonlinear transformation of the model}
We start by discussing the nonlinear transformation which converts \eqref{e:ll} to a linear heat equation. This connection was exploited in almost all analytic 
results as well as computational methods. It is based on the fact that the second derivative of the buyer vendor distribution $f$ at the price $p(t)\pm a$ behaves 
like $\Lambda(t) \delta_{p(t)\pm a}$
Thus, shifting the function by multiples of $\pm a$ and adding them up 'eliminates' the singularity on the right hand side.  
More precisely, for $\Omega = \RR$, we define
\begin{align}\label{e:trans}
F(x,t) &= 
\begin{cases}
\phantom{-}\sum_{n=0}^{\infty} f^+(x+na, t),~~x < p(t)\\
-\sum _{n=0}^{\infty} f^-(x-na,t),~~x > p(t).
\end{cases}
\end{align}
Then the function $F = F(x,t)$ satisfies the heat equation
\begin{subequations}\label{eq:heat}
 \begin{align}
   \partial_t F(x,t) - \partial_{xx} F(x,t) &= 0,\quad \quad x \in \RR, t > 0,\\
  F(x,0) &= F_0(x),\; x \in \RR,
 \end{align}
\end{subequations}
with the transformed initial datum
\begin{align*}
F_0 (x) &= 
\begin{cases}
\phantom{-}\sum_{n=0}^{\infty} f_0^+(x+na),~~x < p_0\\
-\sum _{n=0}^{\infty} f_0^-(x-na),~~x > p_0.
\end{cases}
\end{align*}
Since we consider \eqref{e:ll} with homogeneous Neumann boundary conditions on the interval $(-L,L)$, 
the sum in \eqref{e:trans} is finite. If we assume that the initial price is a multiple of $a$, then the transformed 
initial condition is given by
\begin{align}\label{e:trans_bnd}
F_0(x,t) &= 
\begin{cases}
\phantom{-}\sum_{n=0}^{k_L} f^+_0(x+na),~~x < p_0\\
-\sum _{n=0}^{k_R} f^-_0(x-na),~~x > p_0.
\end{cases}
\end{align}
with
\begin{align*}
k_L = (p_0 + L)/a \text{ and } k_R = (L-p_0)/a.
\end{align*}
We recall that the  solution of the original LL model \eqref{e:ll} can be computed by
\begin{align*}
f(x,t) = F(x,t) - F^+(x+a,t) + F^-(x-a,t).
\end{align*}
This back-transformation allows us to deduce the corresponding transformed Neumann boundary conditions
\begin{subequations}\label{eq:bcnonlin}
 \begin{align}
   \partial_x F(-L,t) &= \partial_x F(-L+a,t),\\
  \partial_xF(L,t) &= \partial_x F(L-a,t),
\end{align}
\end{subequations}
\begin{rem} As explained above the non-linear transformation is tailored specifically to \eqref{e:ll}, i.e. the fact that first derivates of solutions to \eqref{e:ll} at $x=p(t)$ are of the form $\pm \lambda \delta_{p(t) \pm a}$ and thus, when summing up shifted solutions the terms on the right hand side vanish. As soon as the structure of the equation is changed, e.g. by adding new non-linear terms or constraints (that act away from $x=p(t)$), the transformation will no longer work.		
\end{rem}

\subsection{Existence and regularity of the price}
In the following we provide additional existence and regularity results for the direct problem. Note that they are not optimal in terms of regularity but sufficient to define all quantities that we shall need in the sequel. 

\begin{thm}[Existence of $f$, $p(t)$] Let $f_0\in L^2(-L,L)$ and $p_0 \in (-L+a, L-a)$ satisfy \ref{a:compatf0}.
Then the BVP \eqref{e:ll} has a global solution conserving the total mass of buyers and vendors iff the
zero level set $p=p(t)$ of the solution of \eqref{eq:heat}--\eqref{eq:bcnonlin} satisfies $p(t) \in (-L + a, L-a)$ for all $t > 0$. Then the free boundary $p(t)$ converges 
to the stationary price $p_\infty \in (-L + a, L-a)$.
\end{thm}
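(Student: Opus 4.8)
The plan is to reduce the entire statement, via the nonlinear transformation of the previous subsection, to properties of the linear heat equation \eqref{eq:heat} with the nonlocal Neumann-type boundary conditions \eqref{eq:bcnonlin}, for which well-posedness, nodal-line behaviour and large-time asymptotics are classical. First I would record that the transformed datum $F_0$ built from $f_0$ through \eqref{e:trans_bnd} lies in $L^2(-L,L)$ (the sum is finite), that \eqref{eq:heat}--\eqref{eq:bcnonlin} has a unique weak solution $F$, that by parabolic smoothing $F(\cdot,t)$ is real-analytic in $x$ for $t>0$, and that by Sturm's nodal-line theory the number of sign changes of $x\mapsto F(x,t)$ is non-increasing in $t$. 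Since the statement is an equivalence, I would then argue the two implications separately, in each case passing through the transformation.

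For the ``only if'' direction, let $(f,p)$ be a global solution of \eqref{e:ll} conserving the buyer and vendor masses. Substituting $f$ into \eqref{e:trans_bnd} and using that the jumps of $\partial_x f^{\pm}$ at $p(t)$ and at $p(t)\pm a$ telescope --- the cancellation noted in the Remark after \eqref{eq:bcnonlin} --- one checks that $F$ solves \eqref{eq:heat}, that the Neumann condition \eqref{eq:neumannbc} for $f$ together with the back-transformation $f=F-F^{+}(\cdot+a,\cdot)+F^{-}(\cdot-a,\cdot)$ forces \eqref{eq:bcnonlin}, and that the zero level set of $F$ is exactly $p(t)$. The mass balance $\frac{d}{dt}\int_{-L}^{p(t)}f\,dx=\partial_x f(p(t),t)+\Lambda(t)=0$ (and its mirror image on $(p(t),L)$) uses that the Dirac masses $\delta_{p(t)\pm a}$ sit strictly inside $\Omega$, i.e. $p(t)\in(-L+a,L-a)$; hence this inclusion is necessary.

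For the ``if'' direction, assume the zero level set $p(t)$ of $F$ stays in $(-L+a,L-a)$. By Sturm's theorem $F(\cdot,t)$ has at most one sign change for $t>0$; since it has one at $t=0$ and retains a positive buyer mass and a positive vendor mass (the conserved moments of $F$ inherited from \eqref{e:ll}), it has exactly one zero $p(t)$, and it is simple: $\partial_x F(p(t),t)\neq0$. Defining $f$ by the back-transformation, the distributions $\partial_{xx}F^{+}(\cdot+a,\cdot)$ and $\partial_{xx}F^{-}(\cdot-a,\cdot)$ reduce to single Dirac masses at $p(t)-a$ and $p(t)+a$ with weights $\pm\Lambda(t)$, $\Lambda(t)=-\partial_x f(p(t),t)$; these lie in $\Omega$ by hypothesis, so $f$ solves \eqref{e:ll}; \eqref{eq:bcnonlin} returns the Neumann condition for $f$; simplicity of the zero propagates the sign structure \ref{a:compatf0} for all $t$; and the Leibniz computation above, now legitimate, gives conservation of the masses. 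The implicit function theorem, via $\partial_x F(p(t),t)\neq0$, makes $p(\cdot)$ as regular as $F$ allows for $t>0$.

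For the convergence, note that \eqref{eq:heat}--\eqref{eq:bcnonlin} conserves the two moments corresponding to the separately conserved total buyer and total vendor masses, and that its solution converges exponentially in $t$ to the unique steady state $F_\infty$ compatible with those two data; $F_\infty$ is affine on each nodal component, hence has a single simple zero $p_\infty$, and positivity of the two limiting masses places $p_\infty\in(-L+a,L-a)$. Convergence of $F$ to $F_\infty$ in $C^1$ near $p_\infty$ (parabolic estimates), with $\partial_x F_\infty(p_\infty)\neq0$ and the implicit function theorem, gives $p(t)\to p_\infty$. I expect the main obstacle to be the non-degeneracy of the zero level set of $F$ for all $t>0$: it is what makes the free boundary $C^1$, what justifies the distributional identities for $\partial_{xx}F^{\pm}(\cdot\pm a,\cdot)$ and the propagation of \ref{a:compatf0}, and --- through the position of $p(t)$ relative to the width-$a$ layers at $\partial\Omega$ --- it is exactly the mechanism by which mass conservation can fail, which is the substance of the equivalence.
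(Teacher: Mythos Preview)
Your strategy is correct and is precisely the one the paper indicates: reduce everything to the linear heat problem \eqref{eq:heat}--\eqref{eq:bcnonlin} via the nonlinear transformation \eqref{e:trans}, and read off existence, the sign structure of $F$, regularity of the free boundary, and large-time convergence from properties of that linear problem. The paper's own proof is a single sentence deferring to \cite{CMW2011}, so your sketch already supplies far more detail than the paper; just be aware that the nodal-set and conserved-quantity arguments for $F$ under the nonlocal boundary conditions \eqref{eq:bcnonlin} are not quite off-the-shelf Sturm theory and are handled in that reference.
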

\begin{proof}
The proof is mainly based on the definition of the transformation \eqref{e:trans}, see \cite{CMW2011} for details.
\end{proof}

Note that the stationary price is determined by the initial mass of buyers and vendors as well as the transaction rate $a$. In particular
\begin{align}\label{e:pstat}
 p_{\infty} = \frac{2M^l L - a (M^l-M^r)}{2(M_l + M_r)} - \frac{L}{2}
\end{align}
where $M^l = \int_-L^{p_0} f_0(x) dx$ and $M^r = \int_{p_0}^L f_0(x) dx$.
The presented analysis of the adjoint and assimilation problem relies on the  following regularity result for the price $p=p(t)$.

\begin{lemma}[Regularity of $p(t)$]\label{lem:regp} Let $f_0\in L^2(-L,L)$ and $p_0 \in (-L+a, L-a)$  satisfy  \ref{a:compatf0}. Then $p(t) \in C^1([\eps,T])$ for $\eps > 0$.
 \end{lemma}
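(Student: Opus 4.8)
\emph{Proof plan.} My plan is to transfer regularity from the transformed function $F$ of \eqref{e:trans} to its zero level set. Recall that $F$ solves the linear heat equation \eqref{eq:heat}--\eqref{eq:bcnonlin} with datum $F_0\in L^2(-L,L)$ (a finite sum of shifts of $f_0^{\pm}$, see \eqref{e:trans_bnd}), and that the transformation formula together with the sign structure of the solution of \eqref{e:ll} --- namely $f(\cdot,t)>0$ on $(-L,p(t))$ and $f(\cdot,t)<0$ on $(p(t),L)$, preserved in time under \ref{a:compatf0}, cf.\ \cite{CMW2011} --- yields $F(\cdot,t)>0$ on $(-L,p(t))$, $F(\cdot,t)<0$ on $(p(t),L)$ and $F(p(t),t)=0$. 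Hence for each $t>0$ the price $p(t)$ is the unique zero of $x\mapsto F(x,t)$, at which $F$ changes sign from $+$ to $-$. I would then carry out three steps: (1) $F$ is $C^\infty$ near the free-boundary curve for $t>0$; (2) $\partial_x F(p(t),t)<0$; (3) conclude via the implicit function theorem.

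For (1) I would invoke interior parabolic smoothing for the one-dimensional heat equation, which gives $F\in C^\infty\big((-L,L)\times(0,T]\big)$. Since the solution of \eqref{e:ll} is assumed to exist on $[0,T]$, the preceding theorem yields $p(t)\in(-L+a,L-a)$ there; as $p$ is continuous, $p([\eps,T])$ is a compact subset of $(-L,L)$ bounded away from $\pm L$, so a full space--time neighbourhood of $\{(p(t),t):t\in[\eps,T]\}$ lies in the interior where $F$ is smooth, and the nonstandard boundary conditions \eqref{eq:bcnonlin} are irrelevant here.

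Step (2) is the crux. The sign structure already forces $\partial_x F(p(t),t)\le 0$, and I would rule out equality with the strong maximum principle: applying Hopf's boundary point lemma to the heat equation solution $F$ on the time-dependent region $D=\{(x,t):\eps/2<t\le T,\ -L<x<p(t)\}$, where $F>0$ in the interior while $F=0$ on the lateral boundary $x=p(t)$, gives $\partial_x F(p(t),t)<0$ for $t\in[\eps,T]$. Since $f\equiv F$ near $x=p(t)$ (there $F^{+}(\cdot+a,t)$ and $F^{-}(\cdot-a,t)$ vanish, by the sign structure and $a>0$), this is exactly the statement that the transaction rate $\Lambda(t)=-\partial_x f(p(t),t)=-\partial_x F(p(t),t)$ is strictly positive, as one expects on modelling grounds. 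The hard part will be ensuring enough a priori regularity of the free boundary to legitimately apply Hopf's lemma at a \emph{moving} boundary point: one needs a preliminary (parabolic) Hölder modulus of continuity for $p$ on $(0,T]$ --- obtainable from interior parabolic estimates for $F$ and the continuity of $p$ --- so that $D$ admits an interior barrier region at each $(p(t_0),t_0)$. This preliminary free-boundary regularity is where the real work lies; alternatively one could quote the corresponding strict-sign/positivity statement already established for \eqref{e:ll} in \cite{CMW2011}.

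For (3): since $F$ is $C^\infty$ near $(p(t_0),t_0)$ and $\partial_x F(p(t_0),t_0)\neq 0$ for every $t_0\in[\eps,T]$, the implicit function theorem applied to the relation $F(p(t),t)=0$ shows that $t\mapsto p(t)$ is locally $C^\infty$ (a fortiori $C^1$) near each $t_0$, with $p'(t)=-\,\partial_t F(p(t),t)/\partial_x F(p(t),t)$; covering the compact interval $[\eps,T]$ then gives $p\in C^1([\eps,T])$ (in fact $p\in C^\infty([\eps,T])$).
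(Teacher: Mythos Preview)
Your proposal is correct and follows essentially the same approach as the paper: parabolic smoothing of $F$ for $t>0$, the parabolic Hopf lemma at $x=p(t)$ to get $\Lambda(t)=-\partial_x F(p(t),t)>0$, and then differentiation of the defining relation $F(p(t),t)=0$. The paper simply differentiates this relation to obtain $p'(t)=\partial_t F(p(t),t)/\Lambda(t)$ and bounds it, where you invoke the implicit function theorem; these are equivalent. You are more careful than the paper about the legitimacy of Hopf's lemma at a moving boundary (the paper just asserts it), which is a fair point but not a divergence in strategy.
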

 \begin{proof}
   The results is a direct consequence of the fact that $F(x,t)$ is smooth in space and time for all $t>0$ and of the boundedness of $\Lambda$. Indeed, differentiating the 
   relation $F(p(t),t)=0$ yields
  \begin{align}\label{eq:diffp0}
   p'(t) = \frac{\partial_t F(p(t),t)}{\Lambda(t)} = \frac{\partial_{xx} F(p(t),t)}{\Lambda(t)},
  \end{align}
and therefore
\begin{align*}
 \sup_{t\in[\eps,T]} p'(t) \le \frac{\|\partial_{xx} F\|_{C([-L,L]\times [\eps,T])}}{\underline\Lambda},
\end{align*}
where the parabolic version of Hopf's Lemma applied at $x=p(t)$ ensures that $\underline\Lambda = \min_{t \in [\eps,T]} \Lambda(t) > 0$.
 \end{proof}
 \begin{rem}
The regularity of the price $p$ as well as the buyer-vendor density $f$ at the initial time is crucial to define the transformation between the time-dependent domains $[-L,p(t)]$ and $[L,p(t)]$ and the reference domain $[0,1]$ (see Subsection
\ref{s:evolspace}). It is also important for the exact controllability results of Theorem \ref{thm:exact}. Therefore we will work the temporal domain $[\eps,T]$ instead of $[0,T]$ for some fixed $\eps > 0$ in the following only. 
\end{rem}

\subsection{Evolving spaces and the transformation to fixed domains}\label{s:evolspace}

A crucial step in the subsequent analysis is the splitting of the domain $\Omega$ into the part left and right of the price $p(t)$ (illustrated in Figure \ref{f:sketchspacetimedomain}). We introduce the domains
\begin{align*}
 \Omega_\triangleleft = [-L,p(t)] ,\quad \Omega_\triangleright = [p(t),L] ,\text{ and } \Omega = [0,1],
\end{align*}
as well as 
\begin{align*}
 Q_\triangleleft = \Omega_\triangleleft \times [\eps,T],\quad Q_\triangleright = \Omega_\triangleright \times [\eps,T],\text{ and } Q = \Omega \times [\eps,T].
\end{align*}
\begin{figure}
\centering
 \includegraphics[width=0.6\textwidth]{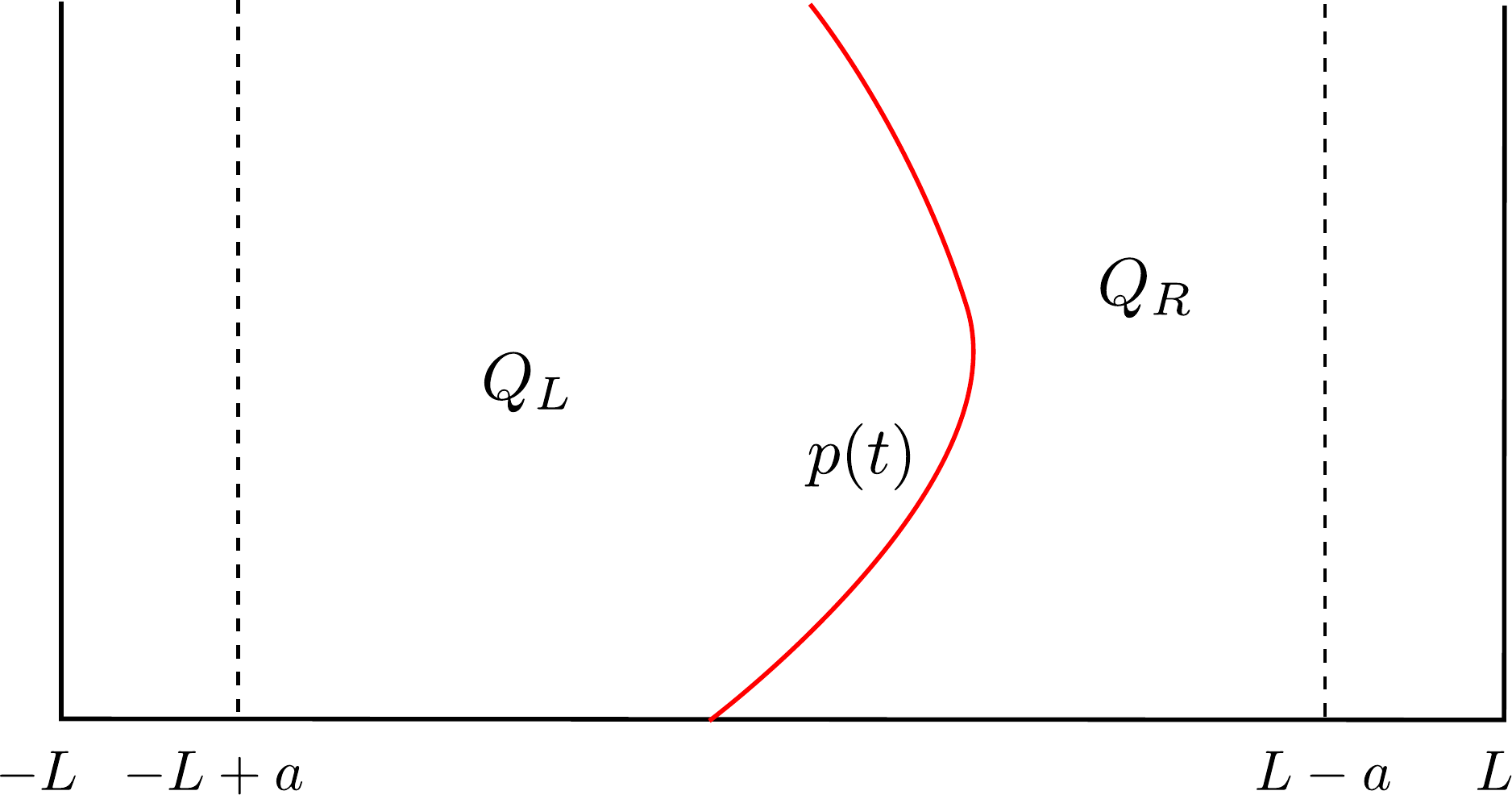}
 \caption{Sketch of the space-time domain and the splitting into $Q_\triangleleft$ (left part) and $Q_\triangleright$ (right part)}
 \label{f:sketchspacetimedomain}
\end{figure} 

Following \cite{Alphonse2015}, we define evolving Bochner spaces on these domains. We present the construction for the left domain $\Omega_\triangleleft=[-L,p(t)]$ only, since the argument for the right
domain is analogous. First denote by $H_\triangleleft^1(t) := H^1((-L,p(t))$ the evolving Hilbert space. Next we define the map
$\phi_t : H_\triangleleft^1(\eps) \to H_\triangleleft^1(t)$ by 
$$
\phi_{t} u(x) = u(\kappa x + \zeta L),
$$
with $\kappa = \frac{p(\eps)+L}{p(t)+L}$ and $\zeta = \frac{p(\eps)-p(t)}{p(t)+L}$ for all $ -L \le x \le p(t)$ and $ \eps \le t \le T$.
The function $\phi_t$ is obviously continuous and reduces to the identity at $t=\eps$. It is also a homeomorphism as its inverse 
$$
\phi_{-t} u(x) = u(\kappa^{-1}x + \zeta^{-1}L) \text{ for } -L \le x \le p(\eps) \text{ and } \eps \le t \le T,
$$
is continuous as well. This allows us to introduce the evolving Bochner spaces (as in \cite[Definition 2.7]{Alphonse2015})
\begin{align} 
L_{H_\triangleleft^1}^{2}&=\{u :[\eps, T] \rightarrow \bigcup H_\triangleleft^1(t) \times\{t\}, t \mapsto(\bar{u}(t), t) \;|\; \phi_{-(\cdot)} \bar{u}(\cdot) \in L^{2}\left(\eps, T ; H_\triangleleft^1(\eps)\right) \}, \\ 
L_{(H_\triangleleft^1)^{*}}^{2} &=\left\{g :[\eps, T] \rightarrow \bigcup (H_\triangleleft^1)^{*}(t) \times\{t\}, t \mapsto(\bar{g}(t), t) \;|\; \phi_{(\cdot)}^{*} \bar{g}(\cdot) \in L^{2}\left(\eps, T ; (H_\triangleleft^1(\eps))^{*}\right)\right\}, 
\end{align}

and, following again \cite{Alphonse2015}, make the identification of $u(t) = (\bar u(t),t)$  with $\bar u(t)$ for $u \in L_{H_\triangleleft^1}^{2}$ (and likewise in $L_{(H_\triangleleft^1)^{*}}^{2}$). 
The space of continuously differentiable functions on evolving Bochner spaces is given by
$$
C_{H_\triangleleft^1}^{k}=\left\{\xi \in L_{H_\triangleleft^1}^{2} | \phi_{-(\cdot)} \xi(\cdot) \in C^{k}\left([\eps, T] ; H_\triangleleft^1(\eps)\right)\right\} \quad \text { for } k \in\{0,1, \ldots\}.
$$
Thus we can, as in \cite[Definition 2.20]{Alphonse2015}, to give a notion of time (material) derivative as
$$
\dot{\xi}(t) :=\phi_{t}\left(\frac{d}{d t}\left(\phi_{-t} \xi(t)\right)\right) \in C_{H_\triangleleft^1}^{0}, 
$$
for any $\xi \in C_{H_\triangleleft^1}^{1}$. Then we can finally define the space used for the notion of weak solutions, namely
\begin{align}\label{eq:evolvingsolutionspace}
W\left(H_\triangleleft^1, (H_\triangleleft^1)^{*}\right)=\left\{u \in L_{H_\triangleleft^1}^{2}\; | \;\dot{u} \in L_{(H_\triangleleft^1)^*}^{2} \right\}.
\end{align}
The definitions of the respective quantities $L_{H_\triangleright^1}^{2}$, $L_{(H_\triangleright^1)^{*}}^{2}$, $C_{H_\triangleright^1}^{k}$, and
$W\left(H_\triangleright^1, (H_\triangleright^1)^{*}\right)$ are analogous.\\
  
While the previous definitions allow us to directly work in a noncylindrical domain, it is sometimes also useful consider the transformation to the 
fixed domain $Q=[0,1]\times [\eps,T]$. Hence we introduce transformations which map $Q_\triangleleft$ and $Q_\triangleright$ to $Q$:
\begin{subequations}\label{eq:trafo}
\begin{alignat}{2}
 T_\triangleleft :& ~Q_\triangleleft \to Q,\qquad   & T_\triangleright :& ~Q_\triangleright \to Q,\\
 (x,t) \mapsto &(\frac{x+L}{L+p(t)},t), \qquad & (x,t) \mapsto& (\frac{-L+x}{-L+p(t)},t).
\end{alignat}
\end{subequations}
Note that due to assumption (A1), $T_\triangleleft$ and $T_\triangleright$ are well-defined and that $T_\triangleright$ actually flips the domain, i.e. it swaps left and right boundary points.

\subsection{Adjoint equations}
The next ingredient will be two adjoint equations, posed on the domains $Q_\triangleleft$ and $Q_\triangleright$, respectively.
\begin{definition}[Adjoint equations]\label{thm:existence_adjoint} For any $\eps> 0$, $\psi_\triangleleft \in L^2(-L,p(T))$, $\psi_\triangleright \in L^2(p(T),L)$, $u_\triangleleft,\,u_\triangleright \in L^2(\eps,T)$ and $T>0$, we introduce the backward in time adjoint equations
 \begin{subequations}\label{eq:adj1}
 \begin{alignat}{2}
 -\partial_t\Phi_\triangleleft(x,t) - \partial_{xx}\Phi_\triangleleft(x,t) &= 0,& \quad &\text{in }  Q_\triangleleft\\ 
  \partial_x \Phi_\triangleleft(-L,t) &= 0,& \quad &\text{for } t \in [T,\eps]\\
  \Phi_\triangleleft(p(t),t) &= u_\triangleleft(t),\;& \quad &\text{for } t \in [T,\eps]\\
  \Phi_\triangleleft(x,T) &= \Psi_\triangleleft(x),\;& \quad &\text{for } x \in \Omega_\triangleleft.
\end{alignat}
\end{subequations}
and
\begin{subequations}\label{eq:adj2}
 \begin{alignat}{2}
 -\partial_t\Phi_\triangleright(x,t) - \partial_{xx}\Phi_\triangleright(x,t) &= 0,& \quad &\text{in } Q_\triangleright \\ 
  \partial_x \Phi_\triangleright(L,t) &= 0,\;& \quad &\text{for } t \in [T,\eps]\\
  \Phi_\triangleright(p(t),t) &= u_\triangleright(t),\;& \quad &\text{for } t \in [T,\eps]\\
  \Phi_\triangleright(x,T) &= \Psi_\triangleright(x),\;& \quad &\text{for } x \in \Omega_\triangleright.
\end{alignat}
\end{subequations}
\end{definition}
Applying the existence theory of, e.g. \cite{Alphonse2015}, for equations on evolving domains, we obtain the following theorem.
\begin{thm}\label{thm:exadj} Let $p \in C^1([\eps,T])$ be given. Then, for every $\Psi_\triangleleft \in L^2(\Omega_\triangleleft)$, $u_\triangleleft\in L^2(\eps,T)$ and every $\Psi_\triangleright \in L^2(\Omega_\triangleright)$, $u_\triangleright\in L^2(\eps,T)$ there exist unique solutions $\Phi_\triangleleft$ and $\Phi_\triangleright$ to \eqref{eq:adj1} and \eqref{eq:adj2}, respectively. Furthermore, we have
\begin{align}
 &\Phi_\triangleleft \in W\left(H_\triangleleft^1, (H_\triangleleft^1)^{*}\right),\\
 &\Phi_\triangleright \in W\left(H_\triangleright^1, (H_\triangleright^1)^{*}\right).
\end{align}
\end{thm}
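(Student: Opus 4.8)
The plan is to reduce each of the two problems \eqref{eq:adj1}, \eqref{eq:adj2} to a standard, forward‑in‑time parabolic initial–boundary value problem with homogeneous lateral data on the fixed cylinder $Q=[0,1]\times[\eps,T]$, to invoke the well‑posedness theory for equations on evolving domains of \cite{Alphonse2015} (equivalently, classical parabolic theory after flattening), and finally to transport the solution back to the non‑cylindrical domain, where membership in $W(H_\triangleleft^1,(H_\triangleleft^1)^{*})$ is immediate from the very definition of the evolving Bochner spaces. I describe the argument only for $\Phi_\triangleleft$; the one for $\Phi_\triangleright$ is identical up to composing with the orientation‑reversing map $T_\triangleright$ of \eqref{eq:trafo}.

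First I reverse time by setting $s=T-t$, which turns the backward heat operator $-\partial_t-\partial_{xx}$ into the forward heat operator $\partial_s-\partial_{xx}$, the final condition $\Phi_\triangleleft(\cdot,T)=\Psi_\triangleleft$ into an initial condition at $s=0$, and $Q_\triangleleft$ into the evolving cylinder over the intervals $[-L,p(T-s)]$. Applying $T_\triangleleft$ from \eqref{eq:trafo} (composed with this time reversal), and using $x=y(L+p)-L$, the equation becomes, after a direct computation,
$$\partial_s v-\frac{1}{(L+p)^2}\,\partial_{yy}v+b(y,s)\,\partial_y v=0\quad\text{in }(0,1)\times(0,T-\eps),$$
where $b$ is a bounded function built from $p$ and $p'$, with a homogeneous Neumann condition at $y=0$ and the Dirichlet condition $v(1,s)=u_\triangleleft(T-s)$. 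Since $p\in C^1([\eps,T])$ and, by \ref{a:pbounds}, $L+p(t)\ge\underline p>0$ uniformly, the leading coefficient $1/(L+p)^2$ is bounded and bounded away from zero and $b$ is bounded; hence this is a uniformly parabolic equation with bounded, continuous coefficients on a fixed cylinder.

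Next I homogenize the lateral data: writing $v=v_0+R$ with the lift $R(y,s)=u_\triangleleft(T-s)\,\rho(y)$, where $\rho\in C^\infty([0,1])$ vanishes near $y=0$ and satisfies $\rho(1)=1$, the function $v_0$ solves the same equation with a forcing term obtained by applying the parabolic operator to $-R$, with homogeneous Neumann and Dirichlet data and modified $L^2$ initial data. Existence and uniqueness of $v_0$ in the energy space $L^2(0,T-\eps;H^1)\cap H^1(0,T-\eps;H^{-1})$ then follows from the Galerkin/Lions method for linear parabolic equations together with an energy estimate and Gronwall's inequality, the latter also giving uniqueness; alternatively one applies the well‑posedness results of \cite{Alphonse2015} directly on the evolving domain, before flattening. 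Transporting $v=v_0+R$ back through $T_\triangleleft^{-1}$ and the time reversal produces $\Phi_\triangleleft$, and since the pull‑back $\phi_{-(\cdot)}\Phi_\triangleleft(\cdot)$ to the reference configuration is precisely this function, one reads off $\Phi_\triangleleft\in W(H_\triangleleft^1,(H_\triangleleft^1)^{*})$, and likewise $\Phi_\triangleright\in W(H_\triangleright^1,(H_\triangleright^1)^{*})$.

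The step I expect to be the main obstacle is the low temporal regularity of the Dirichlet datum $u_\triangleleft\in L^2(\eps,T)$: the naive lift $R$ has $\partial_s R$ only in $H^{-1}(0,T-\eps;H^1)$, which is not an admissible right–hand side in $L^2(0,T-\eps;H^{-1})$, so one cannot land in the energy class $W$ by this route alone. I would resolve this either by recording that in all later uses $u_\triangleleft$ is in fact chosen with additional regularity in time (so that $R$ and $\partial_s R$ are admissible), or, keeping $u_\triangleleft\in L^2$ only, by first constructing $\Phi_\triangleleft$ as a solution defined by transposition against smooth test functions and then verifying, using the interior smoothing of the heat equation away from $s=0$, that the resulting $\Phi_\triangleleft$ still possesses the asserted regularity on the whole of $[\eps,T]$; the orientation‑reversing change of variables $T_\triangleright$ is then the only additional bookkeeping needed for the statement concerning $\Phi_\triangleright$.
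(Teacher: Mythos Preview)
Your approach is essentially the paper's: the paper does not give a detailed proof but simply invokes the existence theory of \cite{Alphonse2015} on evolving domains and, immediately after the theorem, carries out exactly the flattening via $T_\triangleleft$, $T_\triangleright$ to the generic problem \eqref{eq:heatgeneric} on the fixed cylinder, noting the coefficient bounds \eqref{eq:bnd_a_b} and citing \cite{Ladyzenskaja1968}. Your time reversal, flattening, and appeal to standard parabolic theory reproduce this route with more detail.

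Your final paragraph, however, flags a point the paper does not address: with only $u_\triangleleft\in L^2(\eps,T)$ as Dirichlet datum, the lift $R$ has a time derivative that is not in $L^2((H^1)^*)$, so the straightforward energy argument does not place $\Phi_\triangleleft$ in $W(H_\triangleleft^1,(H_\triangleleft^1)^*)$; one must either use a transposition solution or assume more regularity of $u_\triangleleft$. This is a genuine subtlety that the paper's citation-only proof glosses over, and your two proposed resolutions (additional time regularity of the control, or transposition plus interior smoothing) are the standard ways out.
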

With the help of the transformations $T_\triangleleft$ and $T_\triangleright$, equation \eqref{eq:adj1}
and \eqref{eq:adj2} can be transformed into a generic problem of the form
\begin{subequations}\label{eq:heatgeneric}
 \begin{alignat}{2}
  -\partial_t \Phi  - a(t)\partial_{yy}\Phi + b(t)y\partial_y \Phi  &= 0,&\quad  &\text{for } (y,t) \text{ in  }Q\\
   \partial_y \Phi(0,t)&=0,\;&\quad &\text{for }t \in [T,\eps]\\
   \Phi(1,t) &= u(t),\;&\quad & \text{for }t \in [T,\eps]\\
   \Phi(y,T) &= \Psi(y),&\quad &y \in (0,1).
 \end{alignat}
\end{subequations}
For \eqref{eq:adj1} we define $(y,t) = T_\triangleleft(x,t)$ and compute
\begin{align}\label{eq:transform_left}
 a(t) = \frac{1}{(p(t)+L)^2},\; b(t) = \frac{p'(t)}{(p(t)+L)},\; u(t) = u_\triangleleft(t)\text{ and }\Psi(y) = \Psi_\triangleleft((p(T)+L)y-L),
\end{align}
while for \eqref{eq:adj2} and $(y,t) = T_\triangleright$ we obtain
\begin{align}
 a(t) = \frac{1}{(p(t)-L)^2},\; b(t) = \frac{p'(t)}{(p(t)-L)},\; u(t) = u_\triangleright(t)\text{ and } \Psi(y) = \Psi_\triangleright((p(T)-L)y+L).
\end{align}

Note that in view of Lemma \ref{lem:regp} and Assumption \ref{a:compatf0}, the coefficients $a$ and $b$ are (in both cases) continuous and uniformly bounded by	
\begin{align}\label{eq:bnd_a_b}
 \frac{1}{(2L-\underline p)^2} < a(t) \le \frac{1}{\underline p^2}\quad \text{ and }\quad 0 \le b(t) \le \frac{\|p\|_{C^1([0,T])}}{\underline p^2},
\end{align}
as there may be points with $p'(t)= 0$.
Thus, standard existence and regularity results for linear diffusion--convection equations on fixed domains,  such as \cite[Theorem 5.2]{Ladyzenskaja1968}, can be used to 
ensure the solvability of \eqref{eq:heatgeneric}.

\section{Data assimilation problem}\label{sec:da}

We now turn to the main part of this paper - the inverse or data assimilation problem \ref{p:inv}.
In classic data assimilation approaches one would use the measurements of $p = p(t)$ and $\Lambda = \Lambda(t)$ on $[0,T]$ to reconstruct the initial datum $f_0(x)$ of \eqref{e:ll}. 
Here we follow an alternative approach proposed by Puel et al., see \cite{Puel2002,Puel2009},  and estimate the buyer-vendor distribution at the final time, that is $f(x,T)$ instead.
This requires the solution of additional optimal control problems, which are, however, well posed if an appropriate regularisation (penalty) is added.

To use Puel's strategy in our setting, we will estimate the densities of buyers and of vendors separately (that is on the right and left of the free boundary).
The reconstruction is based on the following two duality estimates:

\begin{thm} Let $f_0\in L^2(\Omega)$ satisfying assumption \ref{a:compatf0} and let $f\in L^2(0,T;H^1(\Omega))$ be a solution to \eqref{e:ll} with corresponding price, $p \in C^1([\eps,T])$ satisfying (A2). Furthermore, let
\begin{equation} \Phi_\triangleleft \in W\left(H_\triangleleft^1, (H_\triangleleft^1)^{*}\right), \quad
 \Phi_\triangleright \in W\left(H_\triangleright^1, (H_\triangleright^1)^{*}\right) \end{equation}
satisfy \eqref{eq:adj1} and \eqref{eq:adj2}, respectively.
Then, the following duality identities
\begin{subequations}\label{e:duality}
 \begin{align}
 \int_{-L}^{p(T)} f(x,T)\Psi_\triangleleft(x)\; dx &= \int_{-L}^{p(\eps)} f(x,\eps)\Phi_\triangleleft(x,\eps)\; dx + \int_\eps^T \Lambda(t)  (\Phi_\triangleleft(p(t)-a) - u_\triangleleft(t))\;dt\label{eq:orthorel1},\\
 \int_{p(T)}^{L} f(x,T)\Psi_\triangleright(x)\; dx &= \int_{p(\eps)}^{L} f(x,\eps)\Phi_\triangleright(x,\eps)\; dx + \int_\eps^T \Lambda(t) (u_\triangleright(t) - \Phi_\triangleright(p(t)+a))\;dt\label{eq:orthorel2},
 \end{align}
 \end{subequations}
hold for arbitrary functions $u_\triangleleft,\,u_\triangleright \in L^2(0,T)$ and every $\eps> 0$.
\end{thm}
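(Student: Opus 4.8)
The plan is to establish \eqref{eq:orthorel1} and \eqref{eq:orthorel2} by the standard duality (transposition) argument: multiply the state equation \eqref{e:ll} by the adjoint function, integrate over the relevant moving subdomain, and integrate by parts twice in space and once in time, using all boundary and terminal conditions of both equations. Since \eqref{eq:orthorel2} is obtained from \eqref{eq:orthorel1} merely by interchanging the roles of the two endpoints of the subinterval and the sign of the Dirac source, I would carry out the argument in detail only on $Q_\triangleleft$.

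First I would restrict attention to $Q_\triangleleft=[-L,p(t)]\times[\eps,T]$. By Assumption \ref{a:pbounds} (which forces $p(t)-a>-L$ and $p(t)+a<L$, since $\underline p>a$) only the Dirac $\delta_{p(t)-a}$ of the source term in \eqref{e:ll} is supported in the interior of $\Omega_\triangleleft$, so that pairing the equation for $f$ with $\Phi_\triangleleft$ over $Q_\triangleleft$ produces, on the right-hand side, exactly $\int_\eps^T \Lambda(t)\,\Phi_\triangleleft(p(t)-a,t)\,dt$; this is the one place where interior parabolic regularity of $\Phi_\triangleleft$ for $t<T$ is used, to give $\Phi_\triangleleft(\cdot,t)$ a well-defined pointwise value at $p(t)-a$.

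Next I would treat the two differential terms on the left-hand side separately. For the time derivative, the Leibniz/transport formula on the moving domain together with $p\in C^1([\eps,T])$ (Lemma \ref{lem:regp}) gives $\frac{d}{dt}\int_{-L}^{p(t)} f\,\Phi_\triangleleft\,dx=\int_{-L}^{p(t)}\partial_t(f\,\Phi_\triangleleft)\,dx+p'(t)\,f(p(t),t)\,\Phi_\triangleleft(p(t),t)$, and the boundary contribution vanishes because $f(p(t),t)=0$; integrating in $t$ and using the terminal condition $\Phi_\triangleleft(\cdot,T)=\Psi_\triangleleft$ turns $\int_\eps^T\!\int_{-L}^{p(t)}\partial_t f\,\Phi_\triangleleft$ into $\int_{-L}^{p(T)} f(x,T)\Psi_\triangleleft(x)\,dx-\int_{-L}^{p(\eps)} f(x,\eps)\Phi_\triangleleft(x,\eps)\,dx-\int_\eps^T\!\int_{-L}^{p(t)} f\,\partial_t\Phi_\triangleleft$. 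For the spatial term, integrating $-\int_{-L}^{p(t)}\partial_{xx}f\,\Phi_\triangleleft\,dx$ by parts once, the endpoint contribution at $-L$ drops by the homogeneous Neumann condition \eqref{eq:neumannbc} on $f$, while the contribution at $p(t)$ equals $-\partial_x f(p(t),t)\,\Phi_\triangleleft(p(t),t)=\Lambda(t)\,u_\triangleleft(t)$ by $\Lambda(t)=-\partial_x f(p(t),t)$ and $\Phi_\triangleleft(p(t),t)=u_\triangleleft(t)$; a second integration by parts drops the endpoint term at $-L$ because $\partial_x\Phi_\triangleleft(-L,t)=0$ and the one at $p(t)$ because $f(p(t),t)=0$, leaving $-\int_{-L}^{p(t)} f\,\partial_{xx}\Phi_\triangleleft\,dx$. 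Adding the two contributions, the volume integral $-\int_\eps^T\!\int_{-L}^{p(t)} f\,(\partial_t\Phi_\triangleleft+\partial_{xx}\Phi_\triangleleft)$ vanishes identically by the adjoint equation \eqref{eq:adj1}, and what remains rearranges precisely to \eqref{eq:orthorel1}. The computation on $Q_\triangleright$ is identical up to signs: only $-\Lambda(t)\delta_{p(t)+a}$ contributes, the Neumann condition is imposed at $x=L$, the moving endpoint is now the lower one (still annihilated by $f(p(t),t)=0$), and $\partial_x f(p(t),t)=-\Lambda(t)$ enters with the opposite orientation, giving the term $\int_\eps^T\Lambda(t)\,(u_\triangleright(t)-\Phi_\triangleright(p(t)+a,t))\,dt$ of \eqref{eq:orthorel2}.

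The delicate point is rigor rather than bookkeeping: $f$ is only assumed in $L^2(0,T;H^1(\Omega))$ and, because of the Dirac sources, $\partial_x f$ jumps across the curves $x=p(t)\pm a$, so the two spatial integrations by parts and the transport formula on the noncylindrical domain are not directly licensed at this regularity. I would make them rigorous by working in the weak/transposition formulation, using the integration-by-parts formula for evolving Bochner spaces from \cite{Alphonse2015} to give meaning to the pairing of $\dot\Phi_\triangleleft$ with $f$ on the moving domain, and, if needed, approximating $f_0$ by smooth compatible data (equivalently, exploiting the smoothness of the transformed heat solution $F$ for $t>0$), proving the identities for the smooth approximants and passing to the limit by continuous dependence of $f$, $p$ and $\Lambda$ on the data. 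It is also worth recording that working on $[\eps,T]$ rather than $[0,T]$ is exactly what makes $p\in C^1$, and hence the transport formula and the lower bound on $\Lambda$ (Hopf's lemma, as in Lemma \ref{lem:regp}), available.
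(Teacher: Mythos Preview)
Your proposal is correct and follows essentially the same duality/integration-by-parts argument as the paper: pair the state equation with the adjoint on the moving subdomain, integrate by parts in time and twice in space, and use $f(p(t),t)=0$, $\partial_x f(-L,t)=0$, $\partial_x\Phi_\triangleleft(-L,t)=0$, $\Phi_\triangleleft(p(t),t)=u_\triangleleft(t)$, and $\Lambda(t)=-\partial_x f(p(t),t)$ to kill or identify the boundary terms. If anything, you are more careful than the paper: the paper writes the time step simply as $\int_\eps^T\!\int_{-L}^{p(t)}\partial_t(f\Phi_\triangleleft)\,dx\,dt$ without ever invoking the Leibniz/transport formula or explaining why the moving-boundary contribution $p'(t)f(p(t),t)\Phi_\triangleleft(p(t),t)$ disappears, and it does not discuss the regularity issues you raise; your remarks on making the computation rigorous via the evolving-Bochner integration-by-parts formula or smooth approximation are a genuine addition.
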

\begin{proof} We prove the first estimate only, since the argument for
  \eqref{eq:orthorel2} is the same. We have
\begin{align*}
 \int_{-L}^{p(T)} &f(x,T)\Psi_\triangleleft(x)\; dx - \int_{-L}^{p(\eps)} f(x,\eps)\Phi_\triangleleft(x,\eps)\; dx =\int_\eps^T  \int_{-L}^{p(t)} \partial_t(f(x,t)\Phi_\triangleleft(x,t))\;dxdt \\
 &=\int_\eps^T\int_{-L}^{p(t)} [\partial_{xx} f(x,t)+\lambda(t)\delta_{p(t)-a}]\Phi_\triangleleft(x,t) - f(x,t)\partial_{xx}\Phi_\triangleleft(x,t)\;dxdt\\
 &=\int_\eps^T \left.(\partial_xf\Phi_\triangleleft)\right|^{x=p(t)}_{x=-L} - \left.(f\partial_x\Phi_\triangleleft)\right|^{x=p(t)}_{x=-L}\;dt + \int_\eps^T \Lambda(t)\Phi_\triangleleft(p(t)-a,t)\;dt \\
 &=\int_\eps^T \Lambda(t)(u_\triangleleft(t)+\Phi_\triangleleft(p(t)-a,t))\;dt.
\end{align*}
where we have used the boundary condition \eqref{eq:neumannbc}, $f(p(t),t) = 0$ and the definition of $\Lambda$.
\end{proof}

Now we will use \eqref{eq:orthorel1}--\eqref{eq:orthorel2} to determine $f(x,T)$. 
Since the choice of $\Psi_\triangleleft$ and $\Psi_\triangleright$  in \eqref{eq:orthorel1} and \eqref{eq:orthorel2} is arbitrary and the last term on the right hand 
side contains only known (i.e. computed or measured) quantities, we could obtain a linear functional of $f(x,T)$. The only unknowns are 
the first terms on the respective right hand sides. But since we are free to choose arbitrary boundary data $u_\triangleleft$ and 
$u_\triangleright$, this leads to the null--controllability problems for \eqref{eq:adj1}--\eqref{eq:adj2}. Indeed, if we can chose 
$u_\triangleleft$ and $u_\triangleright$ such that $\Phi_\triangleleft(x,\eps)=0$ and $\Phi_\triangleright(x,\eps)=0$, the unknown terms in both orthogonality relations drop out
and we can reconstruct $f(x,T)$.


\subsection{Optimal control problem}
To conduct the strategy outlined above, we have to solve the optimal control problems
\begin{align}\label{eq:opt1}
\min_{u_\triangleleft\in L^2(\eps,T)} \frac{1}{2}\int_{-L}^{p(\eps)} \Phi_\triangleleft(x,\eps)^2 dx\quad  &\text{ subject to \eqref{eq:adj1}},\\\label{eq:opt2}
\min_{u_\triangleright\in L^2(\eps,T)} \frac{1}{2}\int_{p(\eps)}^L \Phi_\triangleright(x,\eps)^2 dx\quad  &\text{ subject to \eqref{eq:adj2}}.
\end{align}
Since the structure of both problems is the same, we will only discuss the first one. To increase readability, we will drop the subscript $\triangleleft$ and
write $u$, $\phi$, $\ldots$ instead of $u_\triangleleft$, $\phi_\triangleleft$ from now on. The next result states that the optimal control problem is indeed exactly 
null-controllable in the sense of the following definition.
\begin{definition} We say that problem \eqref{eq:opt1} is 
\emph{exactly null-controllable}, if for every initial datum $\Psi \in L^2(\Omega_\triangleleft)$ to \eqref{eq:adj1}  there exists $\bar u\in L^2(\eps,T)$ such that the solution $\Phi$ to \eqref{eq:opt1} with control $u=\bar u$ satisfies
 $\Phi(x,\eps) = 0.$
\end{definition}
The following exact boundary controllability result is based on \cite[Theorem 2.3]{Chae1996}, slightly extended and adapted to our situation. The theorem reads as follows.
\begin{thm}[Exact null-controllability]\label{thm:exact} For every $\Psi \in L^2(\Omega_\triangleleft)$, there exists at least one control $u\in L^2(\eps,T)$ such that 
the solutions $\Phi$ of \eqref{eq:adj1} satisfies
$\Phi(x,\eps) = 0$  { on }$\Omega_\triangleleft$.
Furthermore, there exists a constant $C$ which depends on $p(t)$, $L$ and $T$ such that 
\begin{align}\label{eq:contctrl}
 \|\bar u\|_{L^2(\eps,T)} \le C\|\Psi\|_{L^2(\Omega_\triangleleft)}
\end{align}
holds with $\bar u$ being the control of minimum $L^2$--Norm.
\end{thm}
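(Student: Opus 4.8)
The plan is to reduce Theorem~\ref{thm:exact} to a standard exact null--controllability statement for a one--dimensional uniformly parabolic equation on the fixed cylinder $Q$, and then to read off the bound \eqref{eq:contctrl} from the associated observability inequality. I would start from the reduction already recorded in \eqref{eq:heatgeneric}: the change of variables $T_\triangleleft$ turns \eqref{eq:adj1} into the non--autonomous problem \eqref{eq:heatgeneric} with coefficients \eqref{eq:transform_left}, which by Lemma~\ref{lem:regp} and Assumption~\ref{a:pbounds} obey the two--sided bounds \eqref{eq:bnd_a_b}; thus $a(t)$ is continuous and bounded away from $0$ and $\infty$ while $b(t)y$ is a bounded first--order term. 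Reversing time via $s=T-t$ converts \eqref{eq:heatgeneric} into a genuinely forward, uniformly parabolic equation on $(0,1)$ with homogeneous Neumann condition at $y=0$, Dirichlet boundary control $u$ at $y=1$, initial datum $\Psi$ and target state $0$ at time $T-\eps$. The assertion is then equivalent to exact null--controllability of this forward problem.

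Next I would invoke the Hilbert Uniqueness Method: null--controllability together with the estimate \eqref{eq:contctrl} is equivalent to the observability inequality
\[ \norm{w(\cdot,0)}_{L^2(0,1)}^2 \;\le\; C_{\mathrm{obs}}\int_0^{T-\eps}\abs{\partial_y w(1,s)}^2\,ds \]
for all solutions $w$ of the homogeneous adjoint equation associated with \eqref{eq:heatgeneric}. Granting this, the set of admissible controls is a non--empty closed affine subspace of $L^2(\eps,T)$, hence possesses a unique element $\bar u$ of minimal norm, recovered as the boundary trace $\partial_y\hat w(1,\cdot)$ of the minimiser $\hat w$ of the usual dual functional, and satisfying $\norm{\bar u}_{L^2(\eps,T)}^2\le C_{\mathrm{obs}}\norm{\Psi}_{L^2(\Omega_\triangleleft)}^2$; so one may take $C=\sqrt{C_{\mathrm{obs}}}$, which depends on $p$, $L$ and $T$ only through the quantities in \eqref{eq:bnd_a_b}.

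The hard part will be the observability estimate. For the constant--coefficient heat operator it is precisely \cite[Theorem~2.3]{Chae1996}; here I would reprove and extend it by a global Carleman estimate for the operator $-\partial_s-a(s)\partial_{yy}-b(s)y\,\partial_y$, using a Fursikov--Imanuvilov weight $\theta(s)e^{\lambda\phi(y)}$ with $\phi$ increasing on $[0,1]$, $\partial_y\phi(0)=0$ (so the Neumann condition kills the boundary term at $y=0$) and $\partial_y\phi(1)>0$ (producing the observation term at $y=1$). Choosing $\lambda$ and the large Carleman parameter large enough in terms of the ellipticity constants, $\sup|b|$ and $T-\eps$ — all controlled via \eqref{eq:bnd_a_b} — one absorbs the zeroth-- and first--order contributions and obtains the observability inequality with $C_{\mathrm{obs}}$ depending only on $p$, $L$, $T$. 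The genuine obstacle is this adaptation: \cite{Chae1996} is stated for an autonomous operator, so one must verify that the Carleman machinery is stable under the time--dependent non--divergence coefficients $a(t)$ and $b(t)y$ with the claimed explicit constant dependence; the possible vanishing of $b$ at isolated times (noted after \eqref{eq:bnd_a_b}) is harmless because $b$ enters only through a lower--order term, and the uniform parabolicity and continuity needed are exactly what Lemma~\ref{lem:regp} and Assumption~\ref{a:pbounds} provide.
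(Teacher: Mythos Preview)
Your proposal is correct and rests on the same core ingredients as the paper --- the transformation $T_\triangleleft$ to the fixed cylinder, the bounds \eqref{eq:bnd_a_b}, and the Carleman machinery of \cite{Chae1996} --- but the execution differs in two respects worth noting.

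First, the paper does not redo the Carleman estimate. It simply observes that the continuity of $a$ and $b$ (from Lemma~\ref{lem:regp}) puts \eqref{eq:heatgeneric} within the scope of \cite[Theorem~2.3]{Chae1996} as stated, so your worry that the cited result is ``for an autonomous operator'' and needs adaptation is unfounded: the reference already covers time-dependent continuous coefficients. This saves the entire Carleman computation you outline. Second, the paper inserts a parabolic smoothing step you omit: since \cite{Chae1996} is applied with $H^1$ data, one first lets the equation run for an arbitrarily short time $\delta>0$ to upgrade $\Psi\in L^2$ to $H^1$, recording the estimate $\|\Phi\|_{L^2(\eps+\delta,T;H^1)}\le C\|\Psi\|_{L^2}$. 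The continuity bound \eqref{eq:contctrl} is then assembled not from the observability constant directly, but by chaining this smoothing estimate with the distributed-control estimate of \cite[Theorem~2.1]{Chae1996} and a trace inequality.

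In short: your HUM/observability route is a valid self-contained argument, but the paper's proof is considerably shorter because it treats \cite{Chae1996} as a black box that already handles the variable coefficients, at the cost of an extra smoothing step to meet that reference's regularity hypotheses.
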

 \begin{proof} The regularity of the price $p$ allows us to transform the
   problem to a fixed domain using $T_\triangleleft$ defined in \eqref{eq:trafo}. Hence we
   only consider equations of type \eqref{eq:heatgeneric}. 
First we observe that for any positive $\delta > 0$, any solution $\Phi$ to \eqref{eq:heatgeneric} with initial datum $\Psi$ is, by standard parabolic regularity \cite[Chapter 7.1]{Evans2010}, in $L^2(\eps+\delta,T);H^1(0,1))$ with the estimate
\begin{align}\label{eq:contW}
 \|\Phi \|_{L^2(\eps+\delta,H^1(0,1))} \le C \|\Psi\|_{L^2(0,1)}.
\end{align}
Thus, we can assume that already $\Psi \in H^1(0,1)$ holds. Since by lemma \ref{lem:regp}, $p\in C^1([\eps,T])$ (and thus the coefficients $a$ and $b$ in \eqref{eq:heatgeneric} are continuous) we can apply \cite[Theorem 2.3]{Chae1996} to conclude the requested boundary controllability. The continuity estimate \eqref{eq:contctrl} then follows by combining \eqref{eq:contW}, the respective estimate from \cite[Theorem 2.1]{Chae1996} for the distributed control problem and a standard trace inequality.\\
\end{proof}
In order to be able to numerically solve the optimal control problem, we introduce the following regularized version
\begin{align}\label{eq:minreg1}
&\min_{u\in L^2(\eps,T)} \frac{1}{2}\int_{-L}^{p(\eps)} \Phi(x,\eps)^2 dx
  + \frac{\alpha}{2} \int_\eps^T u(t)^2 \; dt \text{ subject to
    \eqref{eq:adj1}}
\end{align}
Standard arguments guarantee the existence of a unique minimizer, see e.g. \cite[Section 3.5]{troltzsch2010optimal}. Calculating the derivatives of 
the corresponding Lagrange functional
\begin{align}\label{eq:lagrange}
  \begin{split}
\mathcal{L} &= \frac{1}{2}\intL \Phi(x,\eps)^2 dx  + \frac{\alpha}{2}\intT u(t)^2 \;dt \\
& + \intT \int_{-L}^{p(t)} G(x,t) \left[-\partial_t \Phi(x,t) -
  \partial_{xx}\Phi(x,t)\right]\; dx dt,
\end{split}
\end{align}
we obtain the first order optimality system
\begin{subequations}\label{eq:heat_opt}
 \begin{alignat}{2}
   \partial_t G(x,t) - \partial_{xx} G(x,t) &= 0,\quad &&\textrm{in } Q_\triangleleft \\
   \partial_x G(-L,t) &= 0, &&\textrm{for }t>\eps \\
    G(p(t),t) &= 0, & &\textrm{for } t > \eps\\
  G(x,\eps) &= -\Phi(x,\eps),\quad &&\textrm{in } \Omega,
 \end{alignat}
\end{subequations}
where $\Phi$ satisfies the adjoint equation \eqref{eq:adj1} and the coupling
 \begin{align}\label{eq:coupling_opt}
\alpha u(t) + \partial_x G(p(t),t) &= 0,\;\textrm{ for } t > \eps.
 \end{align}
The following results examine the convergence of $u$ as $\alpha \to 0$. The proofs are using the same techniques as in \cite{Puel2009}, yet adapted to our boundary control problem.
\begin{thm} For every $\alpha > 0$, denote by $(u_\alpha,\phi_\alpha)$ the corresponding solution to \eqref{eq:minreg1}. Then we have 
\begin{alignat}{2}
 u_\alpha &\to \bar u\text{ in } L^2(\eps,T)\quad &\text{ as }&\alpha\to 0,\\
 \Phi_\alpha &\to \bar \Phi\text{ in }C([\eps,T];H^1(\Omega_\triangleleft))\quad &\text{ as } &\alpha \to 0,
\end{alignat}
where $\bar u$ is the solution to the optimal control problem \eqref{eq:opt1} having minimal $L^2$-norm and $\Phi_\alpha$ and $\bar \Phi$ are the solutions to \eqref{eq:adj1} with boundary data $u_\alpha$ and $\bar u$, respectively.
\end{thm}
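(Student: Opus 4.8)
The plan is to exploit the variational structure of the regularized problems \eqref{eq:minreg1} together with the uniform controllability bound \eqref{eq:contctrl} from Theorem \ref{thm:exact}. First I would test optimality of $u_\alpha$ against the exact control $\bar u$ whose existence and $L^2$-bound are guaranteed by Theorem \ref{thm:exact}. Since $u_\alpha$ is the minimizer of the strictly convex functional in \eqref{eq:minreg1}, comparing its value with the value attained at $\bar u$ (for which the first term $\frac12\int_{-L}^{p(\eps)} \Phi(x,\eps)^2\,dx$ vanishes) yields
\begin{align*}
 \frac{1}{2}\int_{-L}^{p(\eps)} \Phi_\alpha(x,\eps)^2\,dx + \frac{\alpha}{2}\int_\eps^T u_\alpha(t)^2\,dt \le \frac{\alpha}{2}\int_\eps^T \bar u(t)^2\,dt \le \frac{\alpha}{2} C^2 \|\Psi\|_{L^2(\Omega_\triangleleft)}^2.
\end{align*}
This immediately gives two things: $\|u_\alpha\|_{L^2(\eps,T)} \le C\|\Psi\|_{L^2(\Omega_\triangleleft)}$ uniformly in $\alpha$, and $\|\Phi_\alpha(\cdot,\eps)\|_{L^2(\Omega_\triangleleft)}^2 \le \alpha C^2 \|\Psi\|^2 \to 0$.

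Next I would extract a weakly convergent subsequence $u_\alpha \weak u_*$ in $L^2(\eps,T)$. By linearity and continuous dependence of the adjoint problem \eqref{eq:adj1} on its boundary datum (Theorem \ref{thm:exadj}, or after transforming via $T_\triangleleft$ the generic problem \eqref{eq:heatgeneric} together with the standard linear estimates), the map $u \mapsto \Phi(\cdot,\eps)$ is weakly continuous into $L^2(\Omega_\triangleleft)$; hence $\Phi_{u_*}(\cdot,\eps)$ is the weak limit of $\Phi_\alpha(\cdot,\eps)$, which is $0$. Therefore $u_*$ is an admissible control for \eqref{eq:opt1}. Weak lower semicontinuity of the $L^2$-norm combined with the bound $\|u_\alpha\|\le C\|\Psi\| $ and the comparison $\|u_\alpha\|\le \|\bar u\|$ (taking $\bar u$ to be \emph{the} minimal-norm exact control) shows $\|u_*\|_{L^2} \le \liminf \|u_\alpha\|_{L^2} \le \|\bar u\|_{L^2}$. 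Since $\bar u$ is by definition the unique minimal-norm element of the (closed, convex, nonempty) set of exact controls, we get $u_* = \bar u$; in particular the limit is independent of the subsequence, so the whole family converges weakly. To upgrade to strong convergence, note $\limsup \|u_\alpha\| \le \|\bar u\| = \|u_*\| \le \liminf \|u_\alpha\|$, so $\|u_\alpha\| \to \|u_*\|$, and weak convergence plus norm convergence in the Hilbert space $L^2(\eps,T)$ gives $u_\alpha \to \bar u$ strongly.

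For the convergence $\Phi_\alpha \to \bar\Phi$ in $C([\eps,T];H^1(\Omega_\triangleleft))$, I would invoke linearity: $\Phi_\alpha - \bar\Phi$ solves \eqref{eq:adj1} with homogeneous terminal datum $\Psi \equiv 0$ and boundary datum $u_\alpha - \bar u$. After transforming to the fixed domain via $T_\triangleleft$, the estimates in \eqref{eq:bnd_a_b} make the coefficients $a,b$ uniformly bounded and continuous, so the regularity theory for \eqref{eq:heatgeneric} (e.g. \cite[Theorem 5.2]{Ladyzenskaja1968} together with a trace/embedding argument into $C([\eps,T];H^1)$, as already used for \eqref{eq:contW}) yields
\begin{align*}
 \|\Phi_\alpha - \bar\Phi\|_{C([\eps,T];H^1(\Omega_\triangleleft))} \le C\,\|u_\alpha - \bar u\|_{L^2(\eps,T)} \to 0.
\end{align*}
The main obstacle I anticipate is making the weak-continuity step $u \mapsto \Phi(\cdot,\eps)$ rigorous: the boundary control enters a Dirichlet condition on the \emph{moving} portion $x = p(t)$, so one must either work carefully in the evolving Bochner space $W(H_\triangleleft^1,(H_\triangleleft^1)^*)$ of \eqref{eq:evolvingsolutionspace}, or transform to the fixed domain and track how the trace operator and the transport-type coefficient $b(t)y\partial_y\Phi$ interact with weak convergence of $u$; in particular one must ensure the solution map has enough compactness/regularity (parabolic smoothing away from $t=\eps$, as in \eqref{eq:contW}) that the terminal-to-initial trace passes to the weak limit. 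Everything else is the routine convex-analysis argument for Tikhonov regularization of a linear exactly-controllable system.
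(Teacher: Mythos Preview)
Your proposal is correct and follows essentially the same route as the paper: compare $u_\alpha$ against the minimal-norm exact control $\bar u$ to get the key inequality, extract a weakly convergent subsequence, identify the weak limit as $\bar u$ via lower semicontinuity and uniqueness, and upgrade to strong convergence through norm convergence in $L^2(\eps,T)$.

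The one genuine difference is how the convergence of the states is obtained. The paper does not use a continuous-dependence estimate $\|\Phi_\alpha-\bar\Phi\|_{C([\eps,T];H^1)}\le C\|u_\alpha-\bar u\|_{L^2}$; instead it argues that from the uniform $L^2$-bound on $u_\alpha$ and the weak formulation of \eqref{eq:adj1} one gets enough a priori bounds on $\Phi_\alpha$ to apply an Aubin--Lions compactness argument, yielding $\Phi_\alpha\to\tilde\Phi$ in $C([\eps,T];H^1)$ directly from the \emph{weak} convergence of $u_\alpha$. This simultaneously handles the passage to the limit in the trace $\Phi_\alpha(\cdot,\eps)\to 0$, which is exactly the ``obstacle'' you anticipate. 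Your alternative (first get $u_\alpha\to\bar u$ strongly, then use linear dependence of $\Phi$ on $u$) is cleaner in spirit, but the estimate you write down requires mapping $L^2$ Dirichlet data to $C([\eps,T];H^1)$ solutions, which is not immediate; the paper's compactness route avoids having to justify such an estimate.
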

\begin{proof} By Theorem \ref{thm:exact}, we know that there exists at least one function solving the exact null controllability problem. Thus, the set of all these controls in $L^2(\eps,T)$ is nonempty. As it is also convex and closed, there exists a unique $\bar u$ having minimal $L^2$-norm. Since $u_\alpha$ minimizes the functional \eqref{eq:minreg1} among all function in $L^2(\eps,T)$ we have
 \begin{align}\label{eq:estalpha}
   \frac{1}{2}\int_{-L}^{p(\eps)} \Phi_\alpha(x,\eps)^2 dx  + \frac{\alpha}{2} \int_\eps^T u_\alpha(t)^2 \; dt \le \frac{\alpha}{2} \int_\eps^T \bar u(t)^2\;dt
 \end{align}
 which implies the (uniform in $\alpha$) bound
 \begin{align}\label{eq:uniforml2_control}
 \frac{1}{2} \int_\eps^T u_\alpha(t)^2 \; dt &\le C.
 \end{align}
Thus, we can extract a subsequence, again labelled $u_\alpha$ that converges weakly to some $\tilde u$ in $L^2(\eps,T)$. Using the weak formulation of \eqref{eq:adj1} and an Aubin-Lions argument, we see that this is sufficient to obtain the convergence 
\begin{align*}
 \Phi_\alpha \to \tilde\Phi \text{ in } C([\eps,T];H^1(-L,p(t))),\quad \text{ as } \alpha \to 0
\end{align*}
and \eqref{eq:estalpha} implies
$ \tilde \Phi(\eps,x) = 0.$
Thus, arguing as in the proof of \cite[Theorem 2.12]{Puel2009}, we can use the fact that $\bar u$ has minimal norm as well as the lower semi-continuity of the norm w.r.t weak convergence to obtain that $\tilde u = \bar u$. This argument also implies norm convergence and the uniqueness of the limit then finally yields
\begin{align*}
 u_\alpha \to \bar u\text{ in } L^2(\eps,T).
 \end{align*}
This also implies $\tilde \Phi = \Phi$ which completes the proof.
\end{proof}
\begin{rem} Understanding the optimal control problem \eqref{eq:opt1} (or \eqref{eq:opt2}) as Tikhonov regularisation, one could ask for convergence rates of $u_\alpha$ to $\bar u$ as $\alpha \to \infty$. Indeed, such rates could be expected under appropriate source conditions on $\bar u$. The interesting point now is to understand the influence of $p(t)$ in the definition of the forward operator in the characterisation of such conditions and also how perturbation in $p$ would influence them. We leave this question for future research.
\end{rem}
%
\section{Stability in the presence of measurement errors}\label{sec:stability}
Assume we have measurements of two different prices $p_1$ and $p_2$ as well as two different transaction rates $\Lambda_1(t)$ and $\Lambda_2(t)$. Can we control the difference in the reconstructions $f_1(x,T)$ and $f_2(x,T)$ as well as the future predicted prices $p_1(t)$ and $p_2(t)$ for $t>T$ in terms of these differences? In this section we will give a positive answer to this question based on the following strategy
\begin{enumerate}
 \item Estimate the error in the optimal controls $u_1$ and $u_2$ in terms of the error in $p_1$ and $p_2$ (Lemma \ref{lem:stab_u}).
 \item Estimate the error in the respective reconstructions $f_1(x,T)$ and $f_2(x,T)$ in terms of errors in price and transaction rate (Lemma \ref{lem:stab_initial}).
 \item Use these results to predict errors in the future price (Lemma \ref{lem:stab3}).
\end{enumerate}
Note however that for the last point we need to make additional regularity assumptions on the reconstructed final data that do not directly follow from our analysis (see Remark \ref{rem:regularity} for details).
We start by assuming 
\begin{enumerate}[label=(A\arabic*),start=3]
\item  $\|p_1 - p_2 \|_{C^1([\eps,T])} \le \delta_p \quad \text{and} \quad \|\Lambda_1 - \Lambda_2\|_{L^2((\eps,T))} \le \delta_\lambda.$ \label{a:plbounds}
\end{enumerate}
W.l.o.g. we only consider the optimality system related to \eqref{eq:opt1},
i.e. the left part  $\Omega_\triangleleft = [-L,p(t)]$ and again drop the subscript $\triangleleft$. Moreover, we transform all equations to the unit interval $[0,1]$, so that the optimality system reads as
\begin{subequations}
  \begin{alignat}{2}\label{eq:transformed_primal}
  -\partial_t \Phi  - a(t)\partial_{yy}\Phi + b(t)y\partial_y \Phi  &= 0, \quad &&\, \textrm{in } Q \\
   \partial_y \Phi(0,t)&=0, \quad&&\text{for } t > \eps\\
   \Phi(1,t) &= u(t), \quad&&\text{for } t > \eps\\
   \Phi(y,T) &= \Psi(y),\quad&&\textrm{for all } 0 \le y \le 1.
  \end{alignat}
 \begin{alignat}{2}\label{eq:transformed_adjoint}
  \partial_t G  - a(t)\partial_{yy}G - b(t)y\partial_y G &= 0, \quad &&\,\textrm{in } Q \\ 
   \partial_y G(0,t)&=0, &&\text{for } t>\eps\\
   G(1,t) &= 0, &&\text{for } t>\eps\\
   G(y,\eps) &= - \Phi(y,\eps),\quad&&\text{for all } 0 \le y \le 1.
 \end{alignat}	
 and the coupling condition
 \begin{align}\label{eq:transformed_coupling}
  \alpha u(t) + \frac{1}{(p(t)+L)}\partial_y G(1,t) &= 0,\; \text{ for }t > \eps,
 \end{align}
\end{subequations}
with $a(t)$ and $b(t)$ as defined in \eqref{eq:transform_left}. Note that the transformed primal and dual equations are still adjoint to one another, yet now with respect to the scalar product
\begin{align}\label{eq:transformed_scalar}
 (u,v) := \int_\eps^T \int_0^1 (p(t) + L) uv\;dxdt.
\end{align}

\begin{lemma}\label{lem:opti_apriori}
Let $\Phi$ and $G$ be the solutions to \eqref{eq:transformed_primal} and \eqref{eq:transformed_adjoint}, respectively. Then we have
\begin{align*}
 \|\Phi\|_{L^\infty((T,\eps);L^2(0,1))} + \|\sqrt{a}\Phi\|_{L^2((T,\eps);H^1(0,1))} &\le C_1\left( 1 + \|\Psi\|_{L^2((0,1))}\right),\\
 \|G \|_{L^\infty((\eps,T);L^2(0,1))} + \|\sqrt{a}G\|_{L^2((\eps,T);H^1(0,1))} &\le C_2\|\Phi(\cdot,\eps)\|_{L^2((0,1))},
\end{align*}
with $C_1 = C_1(\alpha, \underline p, L, T)$ and $C_2 = C_2(\underline p, L, T)$.
\end{lemma}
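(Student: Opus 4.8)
The plan is to prove both inequalities by the classical energy method together with Gronwall's lemma, treating the adjoint equation \eqref{eq:transformed_adjoint} first since it carries only homogeneous boundary data and is therefore the clean case. Testing \eqref{eq:transformed_adjoint} with $G$ and integrating by parts over $(0,1)$, the second-order term contributes the boundary expression $-a(t)[G\partial_y G]_0^1$, which vanishes because $\partial_y G(0,t)=0$ and $G(1,t)=0$, while a further integration by parts turns the convection term into $-b(t)\int_0^1 yG\partial_y G\,dy=\tfrac{b(t)}2\|G(\cdot,t)\|_{L^2(0,1)}^2$ (using $G(1,t)=0$ once more). This yields
\[
\tfrac12\tfrac{d}{dt}\|G(\cdot,t)\|_{L^2(0,1)}^2+a(t)\|\partial_y G(\cdot,t)\|_{L^2(0,1)}^2+\tfrac{b(t)}2\|G(\cdot,t)\|_{L^2(0,1)}^2=0 .
\]
Since $a(t)>0$ and $b(t)\ge0$ by \eqref{eq:bnd_a_b}, the map $t\mapsto\|G(\cdot,t)\|_{L^2(0,1)}$ is non-increasing and hence bounded by $\|G(\cdot,\eps)\|_{L^2(0,1)}=\|\Phi(\cdot,\eps)\|_{L^2(0,1)}$, which is the claimed $L^\infty$ bound. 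Integrating the identity over $(\eps,T)$ and keeping only the favourable-sign contributions gives $\int_\eps^T a\|\partial_y G\|_{L^2(0,1)}^2\,dt\le\tfrac12\|\Phi(\cdot,\eps)\|_{L^2(0,1)}^2$; adding $\int_\eps^T a\|G\|_{L^2(0,1)}^2\,dt\le\underline p^{-2}(T-\eps)\|\Phi(\cdot,\eps)\|_{L^2(0,1)}^2$, which uses the upper bound $a\le\underline p^{-2}$ from \eqref{eq:bnd_a_b}, produces the weighted $L^2(H^1)$ estimate with $C_2=C_2(\underline p,L,T)$.

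For $\Phi$ the substitution $\tau=T-t$ turns \eqref{eq:transformed_primal} into a genuine forward parabolic equation; testing it with $\Phi$ and integrating by parts as above produces
\[
\tfrac12\tfrac{d}{d\tau}\|\Phi\|_{L^2(0,1)}^2+a\|\partial_y\Phi\|_{L^2(0,1)}^2=a\,u\,\partial_y\Phi(1,\cdot)-\tfrac b2 u^2+\tfrac b2\|\Phi\|_{L^2(0,1)}^2 ,
\]
where $a$, $b$, $u$, $\Phi$ are now read as functions of $\tau$. The only new feature compared with $G$ is the boundary term $a\,u\,\partial_y\Phi(1,\cdot)$ coming from the non-homogeneous Dirichlet datum $\Phi(1,\cdot)=u$; the term $-\tfrac b2 u^2$ is favourable and is discarded, and $\tfrac b2\|\Phi\|_{L^2(0,1)}^2$ will go into the Gronwall step. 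To separate the effect of $u$ I would split $\Phi=\Phi_0+\Phi_D$, where $\Phi_0$ solves the same equation with homogeneous condition $\Phi_0(1,\cdot)=0$ and terminal datum $\Psi$, and $\Phi_D$ carries the boundary datum $u$ with zero terminal datum. The part $\Phi_0$ is estimated verbatim as $G$ above, so that $\|\Phi_0\|_{L^\infty((T,\eps);L^2(0,1))}+\|\sqrt a\Phi_0\|_{L^2((T,\eps);H^1(0,1))}\le C(\underline p,L,T)\|\Psi\|_{L^2(0,1)}$.

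Before estimating $\Phi_D$ one bounds the control itself: since $u=u_\alpha$ minimises \eqref{eq:minreg1}, comparing its value with the value at $u\equiv0$ and invoking the $\Phi_0$-estimate gives $\tfrac\alpha2\|u_\alpha\|_{L^2(\eps,T)}^2\le\tfrac12\|\Phi_0(\cdot,\eps)\|_{L^2(0,1)}^2\le C\|\Psi\|_{L^2(0,1)}^2$, hence $\|u_\alpha\|_{L^2(\eps,T)}\le C\alpha^{-1/2}\|\Psi\|_{L^2(0,1)}$ --- this is where the $\alpha$-dependence of $C_1$ enters. It then remains to estimate $\|\Phi_D\|_{L^\infty((T,\eps);L^2(0,1))}+\|\sqrt a\Phi_D\|_{L^2((T,\eps);H^1(0,1))}$ by $\|u_\alpha\|_{L^2(\eps,T)}$, which I would obtain by lifting the boundary datum into the interior and applying continuous dependence for the resulting forced Dirichlet problem. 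Collecting the three contributions and tracking the constants from \eqref{eq:bnd_a_b} yields the first estimate with $C_1=C_1(\alpha,\underline p,L,T)$, the additive $1$ merely absorbing lower-order constants.

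The main obstacle is precisely the boundary term $a\,u\,\partial_y\Phi(1,\cdot)$: a boundary trace of $\partial_y\Phi$ is not controlled by the interior dissipation $a\|\partial_y\Phi\|_{L^2(0,1)}^2$, and $L^2$ Dirichlet data does not by itself give $L^2(H^1)$ regularity, so the estimate for $\Phi_D$ genuinely relies on the optimality structure (the a priori bound on $u_\alpha$) together with parabolic smoothing, with some extra care needed near $\tau=0$, i.e. near $t=T$, where the terminal datum is only in $L^2$. Once this term is under control, the rest --- the integrations by parts, the sign bookkeeping for the two convection terms, and the Young and Gronwall steps --- is routine, the only point to watch being the dependence of all constants on $\underline p$, $L$, $T$ (and, for $C_1$, on $\alpha$) via \eqref{eq:bnd_a_b}.
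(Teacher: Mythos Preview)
Your approach—testing the equations with $\Phi$ and $G$ themselves—is exactly the paper's, whose proof is a two-sentence sketch saying precisely this; your treatment of $G$ is a correct expansion of what the paper intends (with the minor caveat that \eqref{eq:bnd_a_b} does not actually guarantee $b\ge0$ when $p'<0$, so the monotonicity of $\|G(\cdot,t)\|_{L^2}$ should be replaced by a Gronwall step, as you already do for $\Phi$).

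The one methodological difference is how you bound $\|u_\alpha\|_{L^2}$. You compare the value of \eqref{eq:minreg1} at $u_\alpha$ with its value at $u\equiv0$, obtaining $\|u_\alpha\|_{L^2}\le C\alpha^{-1/2}\|\Psi\|_{L^2}$; this is entirely self-contained. The paper instead invokes \eqref{eq:uniforml2_control}, i.e.\ comparison with the exact null control $\bar u$ from Theorem~\ref{thm:exact}, which via \eqref{eq:contctrl} gives $\|u_\alpha\|_{L^2}\le\|\bar u\|_{L^2}\le C\|\Psi\|_{L^2}$. Since the statement allows $C_1$ to depend on $\alpha$, your bound suffices; the paper's route is sharper in $\alpha$ but rests on the Carleman-based controllability machinery already established. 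The boundary-trace difficulty you flag for $\Phi_D$ (equivalently, $L^2(H^1)$ regularity for merely $L^2$ Dirichlet data) is genuine, and the paper's terse proof does not address it either.
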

\begin{proof}
These are standard estimates obtained choosing $\Phi$ and $G$ as test functions in the weak formulation of \eqref{eq:transformed_primal} and \eqref{eq:transformed_adjoint}, respectively. For the first estimate, we additionally used the $L^2$-bound \eqref{eq:uniforml2_control} on the boundary control, which introduced the $\alpha$-dependence in $C_1$.
\end{proof}
Now we are able to prove stability of the optimal control problem in terms of measurement errors in the price.
\begin{lemma}[Stability of $u$]\label{lem:stab_u} Consider two different prices $p_1(t)$ and $p_2(t)$ such that $p_1(\eps) = p_2(\eps)$ and $\|p_1-p_2\|_{C^1([\eps,T])} \le \delta_p$. Denote by $\Phi_1$ and $\Phi_2$ and $G_1$ and $G_2$ the solutions to \eqref{eq:transformed_primal} and \eqref{eq:transformed_adjoint} with $p=p_1$ and $p=p_2$, respectively. Then the following 
stability estimate for the controls $u_1$ and $u_2$ holds:
\begin{align*}
  &\int_0^1 (\Phi_1(x,\eps)-\Phi_2(x,\eps))^2\;dx + \frac{\alpha}{2}\int_\eps^T  (u_1(t)-u_2(t))^2\;dt \le C_3(\alpha, \underline p, L, T, \Psi) \delta_p^2.
\end{align*}
\end{lemma}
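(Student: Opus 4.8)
The plan is to derive equations for the differences $\Phi := \Phi_1 - \Phi_2$, $G := G_1 - G_2$, $u := u_1 - u_2$ and to exploit the adjoint structure of the optimality system together with the a priori bounds of Lemma \ref{lem:opti_apriori}. Since $p_1$ and $p_2$ generate different coefficients $a_i(t)$, $b_i(t)$ in the transformed equations \eqref{eq:transformed_primal}, the difference $\Phi$ solves a heat-type equation driven by source terms of the form $(a_1 - a_2)\partial_{yy}\Phi_2 + (b_1 - b_2) y \partial_y \Phi_2$, with the Dirichlet datum $\Phi(1,t) = u(t)$ at the controlled boundary and zero terminal datum $\Phi(y,T) = 0$ (both problems share the same $\Psi$). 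Likewise $G$ solves the backward-adjoint equation with analogous source terms and terminal/initial couplings $G(y,\eps) = -\Phi(y,\eps)$, $G(1,t) = 0$. From Assumption \ref{a:plbounds} and the bounds \eqref{eq:bnd_a_b} on $a$ and $b$ in terms of $\|p\|_{C^1}$ one gets $\|a_1 - a_2\|_{C([\eps,T])} + \|b_1 - b_2\|_{C([\eps,T])} \le C\,\delta_p$.

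\textbf{Key steps.} First I would write the weak formulation of the equation for $\Phi$ and test with $\Phi$ itself; because the terminal datum vanishes and the boundary term at $y=1$ produces a contribution involving $u(t) = \alpha^{-1}$-times a boundary flux of $G$ via \eqref{eq:transformed_coupling}, I would keep that boundary term and not discard it. Second, I would test the equation for $G$ with $G$; here the initial datum is $-\Phi(\cdot,\eps)$, so the energy identity links $\|G(\cdot,t)\|_{L^2}^2$ back to $\|\Phi(\cdot,\eps)\|_{L^2}^2$. Third — and this is the crux — I would imitate the Lagrangian/duality computation that produced the optimality system: pairing the $\Phi$-equation against $G$ and the $G$-equation against $\Phi$ and subtracting, the interior principal parts cancel (the two operators are formally adjoint in the weighted inner product \eqref{eq:transformed_scalar}), leaving exactly $\int_0^1 \Phi(x,\eps)^2\,dx + \alpha \int_\eps^T (p_1+L) u(t)^2\,dt$ on one side and, on the other, the pairing of the mismatch source terms $(a_1-a_2)\partial_{yy}\Phi_2 + (b_1-b_2)y\partial_y\Phi_2$ against $G$ (and the symmetric term with $\Phi$ and the $G$-source). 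Each such term is bounded by $C\,\delta_p$ times $\|\Phi_2\|_{L^2(H^1)}$ or $\|G_2\|_{L^2(H^1)}$ times $\|G\|$ or $\|\Phi\|$; using Lemma \ref{lem:opti_apriori} to control $\|\Phi_2\|$, $\|G_2\|$ by $C(\alpha,\underline p,L,T,\Psi)$, and absorbing the $\|\Phi\|$, $\|G\|$ factors by a Gronwall/Young argument (after reusing the energy identities from the first two steps to bound $\|\Phi\|, \|G\|$ in terms of the same left-hand side), one closes the estimate and collects all constants into $C_3(\alpha,\underline p,L,T,\Psi)\,\delta_p^2$.

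\textbf{Main obstacle.} The delicate point is the handling of the mismatch source term $(a_1 - a_2)\partial_{yy}\Phi_2$, since it involves a second spatial derivative of $\Phi_2$, which Lemma \ref{lem:opti_apriori} does not directly control — only $\sqrt{a}\,\Phi_2 \in L^2(H^1)$ is available. I would resolve this by integrating by parts to move one derivative off $\Phi_2$ onto $G$ (writing $\int (a_1-a_2)\partial_{yy}\Phi_2\, G = -\int (a_1-a_2)\partial_y\Phi_2\,\partial_y G$ plus a boundary term, which vanishes at $y=0$ by the Neumann condition and, at $y=1$, contributes a term controlled using $G(1,t)=0$), so that only first derivatives of both $\Phi_2$ and $G$ appear, both of which are $L^2(H^1)$-bounded. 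A secondary technical nuisance is that the coupling \eqref{eq:transformed_coupling} for $p_1$ and $p_2$ differs both through $\alpha u_i$ and through the weight $(p_i+L)^{-1}$, so the boundary term produced when testing the $\Phi$-equation against $G$ must be expanded carefully, splitting $\alpha(p_1+L)u_1 - \alpha(p_2+L)u_2 = \alpha(p_1+L)u + \alpha(p_1 - p_2)u_2$ and estimating the extra $\alpha(p_1-p_2)u_2$ piece by $C\,\delta_p\,\|u_2\|_{L^2}$, with $\|u_2\|_{L^2}$ bounded via \eqref{eq:uniforml2_control}. Everything else is a routine Young/Gronwall bookkeeping exercise.
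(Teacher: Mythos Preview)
Your proposal is correct and follows essentially the same route as the paper: write the difference equations for $\bar\Phi=\Phi_1-\Phi_2$ and $\bar G=G_1-G_2$, test the $\bar\Phi$-equation against $\bar G$ in the weighted inner product \eqref{eq:transformed_scalar}, use the adjoint structure so that the principal parts cancel and the boundary/coupling terms produce exactly $\int_0^1\bar\Phi(\cdot,\eps)^2+\alpha\int(u_1-u_2)^2$, then integrate the $(a_1-a_2)\partial_{yy}\Phi_2\,\bar G$ and $(a_1-a_2)\partial_{yy}G_2\,\bar\Phi$ terms by parts once to reduce to first derivatives. One simplification worth noting: after that integration by parts the $(a_1-a_2)$-terms collapse to $-\partial_y\Phi_2\,\partial_y G_1+\partial_y G_2\,\partial_y\Phi_1$, which involves only the individual $\Phi_i,G_i$ (all controlled directly by Lemma~\ref{lem:opti_apriori}), and the remaining $(b_1-b_2)$-terms contain $\bar G,\bar\Phi$ which are bounded by the triangle inequality from the same lemma --- so no Gronwall or absorption step is actually needed, and your auxiliary energy identities for $\bar\Phi$ and $\bar G$ can be dropped.
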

\begin{proof}
For each $p_i$ (and corresponding $a_i$, $b_i$), we denote by $G_i$, $\Phi_i$ and $u_i$ the corresponding solutions to the optimality system \eqref{eq:transformed_primal}--\eqref{eq:transformed_coupling} and furthermore
$$ \bar \Phi = \Phi_1 - \Phi_2, \quad 
 \bar G = G_1 - G_2.$$
Then, $\bar \Phi$ and $\bar G$ satisfy, in the weak sense, the equations
\begin{subequations}
 \begin{alignat}{2}\label{eq:transformed_primal_diff}
  -\partial_t \bar\Phi  - a_1(t)\partial_{yy}\bar\Phi + b_1(t)y\partial_y\bar \Phi  &= -(a_1-a_2)\partial_{yy}\Phi_2 + (b_1-b_2)y\partial_y \Phi_2 ,  &\,&\text{in } Q\\ 
   \partial_y \bar \Phi(0,t)&=0, & &\text{for } t > \eps\\
   \bar\Phi(1,t) &= u_1(t)-u_2(t),& &\text{for } t >\eps\\
   \bar \Phi(y,T) &= 0,&& 0 \le y \le 1.
\end{alignat}
\text{and}
 \begin{alignat}{2}	\label{eq:transformed_adjoint_diff}
  \partial_t \bar G  - a_1(t)\partial_{yy}\bar G - b_1(t)y\partial_y \bar G &= -(a_1-a_2)\partial_{yy}G_2 - (b_1-b_2)y\partial_y G_2 ,  &\,&\text{ in } Q\\ 
   \partial_y G(0,t)&=0,&&\text{for } t>\eps\\
   G(1,t) &= 0,&&\text{for } t>\eps\\
   G(y,\eps) &= - \bar\Phi(y),&& 0 \le y \le 1.
 \end{alignat}
\end{subequations}
Note that the following calculations are formal since for now we only know existence of weak solutions and therefore some of the integrals are not defined. In the end we arrive, however, at an estimate which is again well defined and could can be obtained rigorously by directly working with weak solutions. We chose this way of presentation as we believe it to be easier to follow. Thus (formally) taking equation \eqref{eq:transformed_primal_diff} and testing it with $\bar G$ (with respect to the scalar product \eqref{eq:transformed_scalar}) yields
\begin{align*}
   \int_Q (p(t)+L)&\bar G[-\partial_t \bar\Phi  -
    a_1(t)\partial_{yy}\bar\Phi + b_1(t)y\partial_y\bar \Phi]\;dxdt\\
  &= \int_Q (p(t)+L)\bar G[-(a_1-a_2)\partial_{yy}\Phi_2 + (b_1-b_2)y\partial_y \Phi_2]\;dxdt 
\end{align*}
Integrating by parts on the left hand side, using \eqref{eq:transformed_adjoint_diff} and the boundary conditions results in
\begin{align*}
(p(t)&+L)	\int_0^1 \bar \Phi(x,\eps)^2\;dx + \alpha \int_\eps^T a_1(t)(u_1(t)-u_2(t))^2 (p(t)+L)\;dt \\
 &= \int_Q (p(t)+L)\left\{ [-(a_1-a_2)[\partial_{yy}\Phi_2\bar G - \partial_{yy}G_2 \bar\Phi] + (b_1-b_2)y[\partial_y \Phi_2 \bar G - \partial_y G_2 \bar \Phi] \right\}\;dxdt.
\end{align*} 
A final integration by parts to remove the second derivatives on the right hand side gives
\begin{align*}
(p&(t)+L)\int_0^1 \bar \Phi(x,\eps)^2\;dx + \alpha \int_\eps^T a_1(t)(u_1(t)-u_2(t))^2 (p(t)+L)\;dt \\
 &= \int_Q (p(t)+L)\left\{ [-(a_1-a_2)[-\partial_{y}\Phi_2\partial_y G_1 + \partial_{y}G_2 \partial_y\Phi_1] + (b_1-b_2)y[\partial_y \Phi_2 \bar G - \partial_y G_2 \bar \Phi] \right\}\;dxdt\\
 &\quad + \alpha \int_\eps^T (a_1-a_2)(u_1-u_2)u_2(p(t)+L)\;dt.
\end{align*} 
Using the estimates of Lemma \ref{lem:opti_apriori}, the boundedness of $u$ in $L^2$ (see \eqref{eq:uniforml2_control}) and Cauchy's inequality applied to the last term on the right hand side, we have
\begin{align*}
&\int_0^1 \bar \Phi(x,\eps)^2\;dx + \frac{\alpha}{2} \int_\eps^T (u_1(t)-u_2(t))^2\;dt\le  C_4(\underline{p}, L,\Psi,\alpha)(\|a_1-a_2\|^2_{L^\infty(\eps,T)} + \|b_1-b_2\|^2_{L^\infty(\eps,T)}),
\end{align*} 
where we also used the lower bounds \eqref{eq:bnd_a_b} on $a$ and Assumption \ref{a:plbounds} to estimate the expression $(p(t)+L)$ from below by $\underline p$ and above by $L-\underline p$. Using again \eqref{eq:bnd_a_b} yields
\begin{align*}
 \|a_1 - a_2\|_{L^\infty((\eps,T))} \le C_4(\underline p, L)\delta_p,\textrm{ and }\|b_1 - b_2\|_{L^\infty((\eps,T))} \le C_5(\underline p, L)\delta_p.
\end{align*}
Combining this with the previous estimate yields the assertion.
\end{proof}
For the second step of our strategy, we return to the orthogonality relation \eqref{eq:orthorel1} which, transformed to $[0,1]$, reads as
\begin{align}\label{eq:duality_transformed}
\begin{split}
 (p(T)+L)\int_{0}^{1} f(x,T)\Psi(x)\; dx &= (p(\eps)+L)\int_{0}^{1} f(x,\eps)\Phi(x,\eps)\; dx \\
 &+ \int_\eps^T \Lambda(t)  (\Phi(p(t)-a) - u(t))\;dt.
 \end{split}
 \end{align}
 In the presence of errors in $p$ and $\Lambda$ we obtain two different relations and the following stability result.
Note that the above results on the adjoint equations imply solvability for $\Phi$ with continuous dependence on the initial value for any $\Psi \in L^2([0,1])$. Hence, the duality relation uniquely defines $f(\cdot,T) \in L^2([0,1])$ when given $f(\cdot,\epsilon) \in L^2([0,1])$. There is further stable dependence of $f(\cdot,T)$ on the errors in the price and transaction rates, which we make precise by the following result:
 \begin{lemma}[Stability of $f(x,T)$]\label{lem:stab_initial} Let $p_1,\,p_2$ and $\Lambda_1,\,\Lambda_2$ be given functions which satisfy Assumption \ref{a:plbounds} and
 denote by $f_1(x,T)$ and $f_2(x,T)$ the corresponding reconstructed prices calculated using \eqref{eq:duality_transformed}. Then we have
\begin{align*}
\int_{0}^{1} (f_1(x,T)-f_2(x,T))^2\; dx &\le C_6(\underbar p, f(\cdot,\eps), \Psi, \alpha,L,T)( \delta_p + \delta_\lambda).
 \end{align*}
 \end{lemma}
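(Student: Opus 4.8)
The plan is to compare the two duality relations \eqref{eq:duality_transformed} for the data pairs $(p_1,\Lambda_1)$ and $(p_2,\Lambda_2)$, subtract them, and estimate each resulting term. Writing $\bar f(T) = f_1(\cdot,T)-f_2(\cdot,T)$ and similarly $\bar\Phi = \Phi_1-\Phi_2$, $\bar u = u_1-u_2$, the subtraction produces, after moving everything to one side, an identity of the form $(p_1(T)+L)\int_0^1 \bar f(x,T)\Psi(x)\,dx = (\text{terms in }\bar\Phi,\bar u,\bar p,\bar\Lambda)$. The key observation is that the test function $\Psi$ is arbitrary, so to recover an $L^2$-bound on $\bar f(T)$ one chooses $\Psi = \bar f(\cdot,T)$ itself (this is legitimate since $\bar f(\cdot,T)\in L^2(0,1)$, as guaranteed by the solvability discussion preceding the lemma), which turns the left-hand side into $(p_1(T)+L)\|\bar f(T)\|_{L^2}^2 \gtrsim \underline p\,\|\bar f(T)\|_{L^2}^2$. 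One then needs the right-hand side to be bounded by $C(\delta_p+\delta_\lambda)\|\bar f(T)\|_{L^2}$, after which dividing by $\|\bar f(T)\|_{L^2}$ and squaring gives the claim — modulo the fact that the stated bound is linear in $\delta_p+\delta_\lambda$ rather than quadratic, which I address below.

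The right-hand side splits into three groups of terms. First, the difference of the initial-time terms: $(p_1(\eps)+L)\int_0^1 f(x,\eps)\bar\Phi(x,\eps)\,dx$ plus a term with the difference $p_1(\eps)-p_2(\eps)$ times $\int f(\eps)\Phi_2(\eps)$; since we assume $p_1(\eps)=p_2(\eps)$ (as in Lemma \ref{lem:stab_u}) the latter vanishes, and the former is controlled by $\|f(\cdot,\eps)\|_{L^2}\|\bar\Phi(\cdot,\eps)\|_{L^2}$, which by Lemma \ref{lem:stab_u} is $\le C\,\delta_p\,\|f(\cdot,\eps)\|_{L^2}$. Second, the difference of the transaction-rate integrals $\int_\eps^T \Lambda_i(t)(\Phi_i(p_i(t)-a,t)-u_i(t))\,dt$: expand via the usual $\Lambda_1 X_1 - \Lambda_2 X_2 = \bar\Lambda X_1 + \Lambda_2(X_1-X_2)$ trick. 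The $\bar\Lambda$-piece is bounded by $\|\bar\Lambda\|_{L^2}$ times an $L^2$-in-time bound on $\Phi_1(p_1(\cdot)-a,\cdot)-u_1$, which follows from Lemma \ref{lem:opti_apriori} together with a trace/Sobolev estimate for the interior evaluation $\Phi_1(p_1(t)-a,t)$ (here one uses that $p_1(t)-a$ stays a definite distance $\underline p - a$ from the moving boundary, via Assumption \ref{a:pbounds}, so the point-evaluation is controlled by $\|\Phi_1\|_{H^1}$ on the left subdomain). The $\Lambda_2$-piece requires estimating $\Phi_1(p_1(t)-a,t)-\Phi_2(p_2(t)-a,t)$, which I split as $[\Phi_1-\Phi_2](p_1(t)-a,t) + [\Phi_2(p_1(t)-a,t)-\Phi_2(p_2(t)-a,t)]$: the first difference is controlled by $\|\bar\Phi\|_{H^1}\lesssim\delta_p$ (again via a trace bound, using the equation for $\bar\Phi$ plus boundary data $\bar u$), and the second by $|p_1(t)-p_2(t)|\,\|\partial_x\Phi_2\|_{\infty}\lesssim \delta_p$ using interior parabolic regularity of $\Phi_2$ away from the boundary. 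The $\bar u$-contribution is bounded by $\|\bar u\|_{L^2}\lesssim\delta_p$ from Lemma \ref{lem:stab_u}. Third, the prefactor differences such as $(p_1(T)-p_2(T))\int f_2(T)\Psi$ and $(p_1(\eps)-p_2(\eps))(\cdots)$: these are $\lesssim\delta_p\|\bar f(T)\|_{L^2}$ (when $\Psi=\bar f(T)$) and $\delta_p\cdot 0$ respectively.

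Collecting, the right-hand side is bounded by $C(\underline p, f(\cdot,\eps),\Psi_{\text{fixed}},\alpha,L,T)\,\bigl(\delta_p + \delta_\lambda\bigr)\,\bigl(1+\|\bar f(\cdot,T)\|_{L^2}\bigr)$; absorbing the $\|\bar f(T)\|_{L^2}$ term into the left-hand side $\underline p\,\|\bar f(T)\|_{L^2}^2$ via Young's inequality, and noting that the constant $C_1$ of Lemma \ref{lem:opti_apriori} already carries the dependence on the (fixed) original datum $\Psi$, yields $\|\bar f(\cdot,T)\|_{L^2}^2 \le C_6\,(\delta_p+\delta_\lambda)$ after bounding $(\delta_p+\delta_\lambda)^2 \le (\delta_p+\delta_\lambda)$ for errors assumed small, or equivalently by keeping the estimate in the form $\|\bar f(T)\|_{L^2}^2 \lesssim (\delta_p+\delta_\lambda)^2 + (\delta_p+\delta_\lambda)\|\bar f(T)\|_{L^2}$ and solving the quadratic. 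The main obstacle I anticipate is the rigorous justification of the pointwise interior evaluations $\Phi_i(p_i(t)-a,t)$ and the difference estimate $\Phi_2(p_1(t)-a,t)-\Phi_2(p_2(t)-a,t)$: this needs uniform-in-$t$ interior $C^1_x$-regularity of the adjoint solutions on a subinterval bounded away from the moving boundary, which is available from standard parabolic interior estimates (e.g. \cite[Chapter 7.1]{Evans2010}) but must be made quantitative in terms of $\underline p$, $\alpha$, and the data — all other terms are routine applications of Lemma \ref{lem:opti_apriori}, Lemma \ref{lem:stab_u}, Cauchy–Schwarz and Young.
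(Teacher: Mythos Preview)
Your decomposition and term-by-term estimates are essentially the same as the paper's, including the handling of the interior point evaluations $\Phi_i(p_i(t)-a,t)$ via parabolic regularity away from the moving boundary (the paper invokes exactly this Lipschitz property). The one genuine methodological difference is how you extract the $L^2$-bound on $\bar f(\cdot,T)$: you test the subtracted duality relation with the specific choice $\Psi=\bar f(\cdot,T)$, obtain a quadratic inequality of the form $\underline p\,\|\bar f(T)\|_{L^2}^2 \lesssim (\delta_p+\delta_\lambda)(1+\|\bar f(T)\|_{L^2})$, and close via Young. The paper instead keeps $\Psi$ arbitrary with $\|\Psi\|_{L^2}=1$ and takes the supremum over all such $\Psi$, which directly yields the linear bound $\|\bar f(T)\|_{L^2}\le C(\delta_p+\delta_\lambda)$.

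The paper's route is slightly cleaner for two reasons. First, since the constants in Lemmas~\ref{lem:opti_apriori} and~\ref{lem:stab_u} depend on $\Psi$ only through $\|\Psi\|_{L^2}$, fixing $\|\Psi\|_{L^2}=1$ makes every constant on the right genuinely independent of the unknown, whereas in your self-testing argument you must carefully track the factor $(1+\|\bar f(T)\|_{L^2})$ coming from $\|\Psi\|_{L^2}=\|\bar f(T)\|_{L^2}$ and then absorb it---your notation ``$\Psi_{\text{fixed}}$'' is a bit misleading here, though your arithmetic is correct. Second, the supremum argument gives the sharper conclusion $\|\bar f(T)\|_{L^2}\lesssim \delta_p+\delta_\lambda$ (hence $\|\bar f(T)\|_{L^2}^2\lesssim(\delta_p+\delta_\lambda)^2$), whereas your route naturally lands on $\|\bar f(T)\|_{L^2}^2\lesssim \delta_p+\delta_\lambda$, which matches the lemma's stated form but is weaker than what the paper's proof actually delivers. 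Either way, your argument is sound.
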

 \begin{proof}
 Subtracting \eqref{eq:duality_transformed} for $(p_1,\lambda_1,u_1)$ and $(p_2,\lambda_2,u_2)$ yields
 \begin{align*}
  &(p_1(T)+L)\int_0^1 (f_1(x,T)-f_2(x,T))\Psi(x)\;dx 	=\\
  &=\underbrace{(p_1(\eps)+L) \int_0^1 f(x,\eps)(\Phi_1(x,\eps)-\Phi_2(x,\eps))\;dx + (p_1(\eps)-p_2(\eps)) \int_0^1 f(x,\eps)\Phi_2(x,\eps)\;dt}_{=:(I)}\\
  &\quad + \underbrace{\int_\eps^T \Lambda_1(t) [ \Phi_1(p_1(t)-a) - u_1(t)) - \Phi_2(p_2(t)-a) - u_2(t))]\;dt}_{=:(II)}\\
  &\quad +\underbrace{\int_\eps^T [\Lambda_1(t) - \Lambda_2(t)] (\Phi_2(p_1(t)-a) - u_2(t))\;dt}_{ =:(III)}- \underbrace{(p_1(T)-p_2(T))\int_0^1 f_2(x,T)\Psi(x)\;dx}_{=:(IV)}
 \end{align*}
We estimate each term of the right hand side separately 
\begin{align*}
 (I) &\le  (L-\underline p) \|f(\cdot,\eps)\|_{L^2((0,1))} \|\Phi_1(\cdot,\eps)-\Phi_2(\cdot,\eps)\|_{L^2(0,1)} \\
 & \quad + \|p_1 - p_1\|_{L^\infty(\eps,T)} \|f(\cdot,\eps)\|_{L^2((0,1))} \|\Phi_2(\cdot,\eps)\|_{L^2((0,1))}\\
 &\le C_7(\underbar p, f, \Psi, \alpha,L,T) \delta_p,
 \end{align*}
where we used Lemmata \ref{lem:opti_apriori} and \ref{lem:stab_u}.
Next we have
\begin{align*}
 (II)& \le \|\lambda_1\|_{L^2(\eps,T)} \|u_1-u_2\|_{L^2(\eps,T)} \\
 &\quad + \int_\eps^T \Lambda_1 [\phi_1(p_1(t)-a) - \phi_1(p_2(t)-a) + \phi_1(p_2(t)-a) - \phi_2(p_2(t)-a)]\;dt\\
 &\le C_8 \|p_1(t)-p_2(t)\|_{L^\infty(\eps,T)} + C_9\|\Phi_1 - \Phi_2\|_{L^2((0,1))} \le C_{10} \delta_p
\end{align*}
using that for positive times $t\ge \eps$ (and away from the boundary) $\phi_1$ is Lipschitz continuous.
Next we have
\begin{align*}
 (III) \le C_{11} \|\Lambda_1 - \Lambda_2\|_{L^2(\eps,T)} \le C_{11} \delta_\lambda.
\end{align*}
and finally
\begin{align*}
 (IV) \le C_{12}\|p_1-p_2\|_{C^1(\eps,T)} \le C_{12}\delta_p.
\end{align*}
Combining all estimates and taking the supremum over all $\Psi \in L^2((0,1))$ with $\|\Psi\|_{L^2((0,1))}=1$, we finally obtain
\begin{align}\label{e:stabf_delta}
 \|f_1(x,T) - f_2(x,T)\|_{L^2((0,1))} \le C_{13} \delta_p + C_{11}\delta_\lambda
\end{align}
Taking $C_6 = \max(C_{13},C_{11})$ yields the assertion.
\end{proof}
\begin{rem} The estimates of Lemma \ref{lem:stab_u} and \ref{lem:stab_initial} show that, for $\alpha > 0$, the reconstruction of the unknown buyer vendor distribution $f(x,T)$ is actually a well-posed problem, at least for sufficiently smooth perturbations of $p$. This is due to the fact that we are solving a regularized optimization problem. The price to pay is that the term involving $f(x,\eps)$ in \eqref{eq:duality_transformed} does not vanish. However, since $f(x,\eps)$ is fixed, is does not appear in our stability estimates.
\end{rem}
For the next result, we choose perturbed prices $p_1$ and $p_2$ such that $\vert p_1(T) - p_2(T) \vert < 2a$ and assume w.l.o.g. that $p_1(T) \le p_2(T)$ and make the following additional assumptions:
\begin{enumerate}[label=(A\arabic*),start=4]
\item $\delta_p < a$, \label{a:deltabound}
\item $f_1(x,T),\,f_2(x,T) \in L^2(-L,L) \cap H^4(I)$ with $I \subset (p_2(T)-a,p_1(T)+a)$\label{a:highreg}
\item $\|f_1(x,T) - f_2(x,T)\|_{H^4(I)} \le C_{6}( \delta_p + \delta_\lambda)$ \label{a:normest}
\end{enumerate}

\begin{rem}\label{rem:regularity}
We mention that indeed it is natural to assume strong regularity of $f$ in a neighbourhood of $p(T)$ for $T > 0$, since it  locally arises as the solution of a heat equation. On the other hand we need to expect some singularities around $p(T)-a$ and $p(T)+a$ due to the singular source terms. Thus (A5) seems completely natural for forwards solutions of the price formation model. Moreover, it can also be verified that $f(\cdot,T)$ reconstructed via \eqref{eq:duality_transformed} has local $H^4$-regularity, which follows from using $\Psi$ supported in $I$ and an analysis of the solution of the parabolic equation for $\Phi$, which can be estimated in terms of the $H^{-4}$ norm of the initial value. 
\end{rem}
In the following we analyse the forward propagation for $t>T$ in a small time interval. We denote the new initial value by $f_{i,0}:=f_i(\cdot,T)$.
First note that using the same localisation strategy as in \cite{MMPW2009} (i.e. multiplying the solution to \eqref{e:ll} with a smooth cut-off function that has support inside the interval $I$), implies

\begin{align}\label{eq:localisation}
\begin{aligned}
\|f_i\|_{L^{\infty}\left((T, T+\gamma ) ; H^{\beta}\left(I_{2}\right)\right)} & \leq C_{14}\left(\|f_i\|_{L^2\left(((T, T+\gamma)) ; H^1((-L,L))\right)}+\left\|f_{i,0}\right\|_{H^{\beta}\left(I\right)}\right) \\ 
& \leq C_{15}\left(\left\|f_{i,0}\right\|_{L^{2}((-L,L))}+\left\|f_{i,0}\right\|_{H^{\beta}\left(I\right)}\right) \text{ for } \beta \le 4,
\end{aligned},
\end{align}
with $\gamma > 0$ to be fixed later on and where $f_i$ is the solution to \eqref{e:ll} with the reconstructed initial datum $f_i(x,T)$ that additionally satisfies \ref{a:deltabound}-\ref{a:normest}. Furthermore, $I_2$ is an interval that is compactly contained in $I$. 
This allows us to derive the following estimates on terms of the form $(\partial_x K_N)\ast f$, where we denote by $K_N(x,t)$ the heat kernel with Neumann boundary conditions on $[-L,L]$, see e.g. \cite[Section 6.4]{Cannon1984}, and furthermore use the notation $$K_N^T(x,t) := K_N(x,t-T).$$ 
\begin{lemma}\label{lem:heat_lq} For given $T>0$, initial values $f_0,\,f_{1,0}$ and $f_{2,0}$ at time $T$ satisfying \ref{a:deltabound}, \ref{a:highreg} we have for $t\in [T,T+\gamma]$ with $\gamma$ sufficiently small 
$$
|((\partial_x K_N^T)\ast (f_{1,0}-f_{2,0}))(p(t),t)|  \le C_{16}\left(\|f_{1,0} - f_{2,0}\|_{L^2((-L,L))} + \|f_{1,0} - f_{2,0}\|_{H^4(I)}\right).
$$
For two continuous functions $p_1(t)$ and $p_2(t)$, we have
$$
|((\partial_x K_N^T)\ast f_0)(p_1(t),t) - ((\partial_x K_N)\ast f_0)(p_2(t),t)| \le C_{17} \|p_1-p_2\|_{C([0,T])}.
$$
\end{lemma}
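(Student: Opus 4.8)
The plan is to exploit the higher regularity of the data on $I$ near the evaluation points through a cut-off decomposition that quarantines the short-time singularity of $\partial_x K_N^T$ as $t\downarrow T$. Fix a smooth cut-off $\chi$ with $\operatorname{supp}\chi\subset I$ and $\chi\equiv 1$ on a neighbourhood $I_2$ of the relevant prices at time $T$; using continuity of $p$ (resp. of $p_1,p_2$) and $p(T)\in I$, choose $\gamma>0$ so small that the prices stay inside a subinterval $I_3$ with $\overline{I_3}\subset I_2$ for all $t\in[T,T+\gamma]$, and so small that, by Assumption~\ref{a:pbounds} and continuity, they remain at distance $\ge c_0>0$ from $\pm L$ there. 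Writing the relevant datum $h$ (equal to $f_{1,0}-f_{2,0}$ in the first claim, to $f_0$ in the second) as $h=\chi h+(1-\chi)h$ splits $\partial_x(K_N^T\ast h)=v^{\mathrm{sm}}+v^{\mathrm{rough}}$, and I would estimate each piece on $I_3$, uniformly for $t\in[T,T+\gamma]$.

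For the smooth piece, $v^{\mathrm{sm}}(\cdot,t)=\partial_x w(\cdot,t)$, where $w$ solves the heat equation with homogeneous Neumann conditions on $[-L,L]$ and initial value $\chi h\in H^4$ at time $T$. Since $\chi h$ is compactly supported in the interior, the higher-order compatibility conditions at $\pm L$ hold automatically, so semigroup (or parabolic) regularity gives $w\in C([T,T+\gamma];H^4)$ with $\|w(\cdot,t)\|_{H^4}\le C\|h\|_{H^4(I)}$; the one-dimensional Sobolev embedding $H^4\hookrightarrow C^3$ then yields the uniform bound $\|v^{\mathrm{sm}}(\cdot,t)\|_{C^2([-L,L])}\le C\|h\|_{H^4(I)}$. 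In particular $v^{\mathrm{sm}}(\cdot,t)$ is bounded and Lipschitz in $x$, with both constants controlled by $\|h\|_{H^4(I)}$.

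For the rough piece, $v^{\mathrm{rough}}(x,t)=\int_{-L}^{L}\partial_x G_N(x,y,t-T)\,(1-\chi)(y)\,h(y)\,dy$ with $G_N$ the Neumann Green's function; for $x\in I_3$ the integrand is supported where $|y-x|\ge d$ for a fixed $d=d(\gamma)>0$, so the diagonal singularity of $G_N$ is never hit. Using the method-of-images representation of $G_N$ on $[-L,L]$ together with the fact that $x$ stays at distance $\ge c_0$ from $\pm L$ — so that every reflected copy of $y$ also stays at distance $\ge c_0$ from $x$ — one obtains off-diagonal bounds $|\partial_x^j G_N(x,y,s)|\le C\,s^{-(j+1)/2}\exp(-c\,\min(d,c_0)^2/s)$ for $0<s\le\gamma$ and $j=0,1$. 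The prefactor is bounded on $(0,\gamma]$, hence $\left(\int_{|y-x|\ge d}|\partial_x^j G_N(x,y,t-T)|^2\,dy\right)^{1/2}\le C(d,c_0,\gamma)$, and Cauchy--Schwarz gives $\|v^{\mathrm{rough}}(\cdot,t)\|_{L^\infty(I_3)}+\|\partial_x v^{\mathrm{rough}}(\cdot,t)\|_{L^\infty(I_3)}\le C\|h\|_{L^2}$ uniformly in $t$; thus $v^{\mathrm{rough}}(\cdot,t)$ is bounded and Lipschitz in $x$ on $I_3$ with both constants controlled by $\|h\|_{L^2}$.

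Finally I would assemble the two claims. For the first, take $h=f_{1,0}-f_{2,0}$, evaluate at $x=p(t)\in I_3$, and add the two $L^\infty$ bounds. For the second, take $h=f_0$, so that $f_0$ is fixed and its norms are absorbed into the constant, write the difference as $(v^{\mathrm{sm}}+v^{\mathrm{rough}})(p_1(t),t)-(v^{\mathrm{sm}}+v^{\mathrm{rough}})(p_2(t),t)$ with both arguments in $I_3$, and use the uniform-in-$t$ Lipschitz bound in $x$ to dominate it by $C\sup_{t\in[T,T+\gamma]}|p_1(t)-p_2(t)|$. The main obstacle is precisely the short-time blow-up of $\partial_x K_N^T$ at $t=T$, and the cut-off split is exactly what defuses it: on the smooth part the derivative is transferred onto the compactly supported $H^4$ datum, leaving only the kernel itself (bounded in $L^1$), while on the rough part the datum sits at positive distance from the evaluation point, where $\partial_x G_N$ and $\partial_{xx}G_N$ are not merely bounded but exponentially small as $t\downarrow T$. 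A secondary technicality is the control of the reflection (image) terms of $G_N$ near the boundary, which is precisely what Assumption~\ref{a:pbounds} provides.
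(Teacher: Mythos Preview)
Your proof is correct and follows the same localisation-by-cut-off idea that underlies the paper's argument, but you make the mechanism fully explicit where the paper treats it as a black box. The paper first records, just before the lemma, the interior regularity estimate \eqref{eq:localisation} (obtained as in \cite{MMPW2009} by multiplying by a cut-off supported in $I$) and then simply \emph{applies} that estimate to the Neumann heat flow with datum $f_{1,0}-f_{2,0}$, reading off the pointwise bound and the Lipschitz continuity in $x$ via Sobolev embedding on $I_2$. You instead split the datum as $\chi h+(1-\chi)h$ and argue separately: semigroup regularity for the compactly supported $H^4$ piece, and explicit off-diagonal Gaussian bounds (method of images) for the far-field piece. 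The two arguments are equivalent in content; the difference is that the paper packages the far-field contribution into the energy-based localisation lemma, while you quantify it directly through kernel decay. Your route is more self-contained and makes the short-time control at $t\downarrow T$ transparent; the paper's route is terser because the work has already been absorbed into \eqref{eq:localisation}.
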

\begin{proof}
First note that $(\partial_x K_N^T)\ast f$ is the solution to the heat equation with homogeneous Neumann boundary condition, 
zero right hand side and initial datum $f$. Then, the first estimate is a direct consequence of \eqref{eq:localisation} applied to such an solution 
with initial datum $f_{1,0} - f_{2,0}$. The second one follows from the fact that, as for $t$ sufficiently small, $p_1(t)$ and $p_2(t)$ are in $I_2$ and thus, 
using again \eqref{eq:localisation}, the derivative of a solution to the heat equation that appears on the left hand side is Lipschitz continuous.
\end{proof}

We are now in a position to state the stability result for future prices.
\begin{lemma}\label{lem:stab3} Let assumptions \ref{a:plbounds}--\ref{a:normest} be satisfied and denote by $f_1$ and $f_2$ the solution to \eqref{e:ll} on the time interval
$[T,T+\gamma]$ with initial data $f_1(x,T)$ and $f_2(x,T)$, reconstructed from measurements $p_1,\Lambda_1$ and $p_2,\Lambda_2$ in $[0,T]$, respectively. 
Then there exists a constant $\gamma > 0$ and we the corresponding prices $p_1$ and $p_2$ for $t \in (T,T+\gamma)$ satisfy the estimate
$$
 \| p_1( t) - p_2( t)\| \le C_{18} e^{C_{19}(t-T)} (\delta_p + \delta_\Lambda).
 $$
\end{lemma}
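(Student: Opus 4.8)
The strategy is to derive a Grönwall-type inequality for the quantity $e(t) := |p_1(t) - p_2(t)|$ on the interval $(T, T+\gamma)$, using the representation of the free boundary $p_i(t)$ as the zero level set of the solution $f_i$ to \eqref{e:ll} and the explicit heat-kernel representation of $f_i$ on a short time interval. First I would write, for each $i$, the Duhamel representation of $f_i$ on $[T, T+\gamma]$: $f_i(x,t) = (K_N^T \ast f_{i,0})(x,t) + \int_T^t \Lambda_i(s)\bigl(K_N(x - (p_i(s)-a), t-s) - K_N(x-(p_i(s)+a),t-s)\bigr)\,ds$, where $K_N$ is the Neumann heat kernel. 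Differentiating the identity $f_i(p_i(t),t) = 0$ in $t$ (as in \eqref{eq:diffp0}) gives $p_i'(t) = \partial_t f_i(p_i(t),t)/\Lambda_i(t) = \partial_{xx} f_i(p_i(t),t)/\Lambda_i(t)$, and an analogous manipulation expresses $p_i'(t)$ through spatial derivatives of the kernel convolutions acting on $f_{i,0}$ and on the source terms. The high regularity assumption \ref{a:highreg}–\ref{a:normest} near $p(T)$, together with \ref{a:deltabound} ensuring that the singular shifts $p_i(t)\pm a$ stay outside a fixed neighbourhood $I$ of the free boundary, is exactly what lets us treat $\partial_{xx} f_i(p_i(t),t)$ as a well-controlled quantity and keeps $\Lambda_i$ bounded away from zero on the short interval by the Hopf-lemma argument from Lemma \ref{lem:regp}.

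Next I would estimate the difference $p_1'(t) - p_2'(t)$. Splitting into the contribution from the initial data $f_{i,0}$ and the contribution from the source terms, the first piece is handled by Lemma \ref{lem:heat_lq}: the term $(\partial_x K_N^T \ast (f_{1,0}-f_{2,0}))(p_1(t),t)$ is bounded by $C_{16}(\|f_{1,0}-f_{2,0}\|_{L^2} + \|f_{1,0}-f_{2,0}\|_{H^4(I)})$, which by Lemma \ref{lem:stab_initial} and Assumption \ref{a:normest} is $\lesssim \delta_p + \delta_\Lambda$; the mismatch in evaluation points $(\partial_x K_N^T \ast f_{2,0})(p_1(t),t) - (\partial_x K_N^T \ast f_{2,0})(p_2(t),t)$ is bounded by $C_{17}\|p_1 - p_2\|_{C([T,t])}$, again by Lemma \ref{lem:heat_lq}. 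For the source-term contribution I would use that $\Lambda_i$ is bounded, $\|\Lambda_1 - \Lambda_2\|_{L^2} \le \delta_\Lambda$, that the shifted kernels $K_N(\cdot - (p_i(s)\pm a), t-s)$ and their derivatives are smooth and Lipschitz in their arguments uniformly for $s \in [T,t]$ with $t-s$ in a bounded range away from the singularity region (since evaluation is at $p_i(t)$, which stays $\gtrsim a$ away from $p_i(s)\pm a$ for small $\gamma$), and that the time integral over $[T,t]$ of these bounded kernels contributes a factor $(t-T)$ or is absorbed into the Grönwall constant. The key structural point is that every term is either a fixed multiple of $(\delta_p + \delta_\Lambda)$ or a multiple of $\sup_{s\in[T,t]} e(s)$ (possibly times $(t-T)$).

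Assembling these bounds yields an inequality of the form $e(t) \le e(T) + \int_T^t \bigl(C_{19} e(s) + C(\delta_p + \delta_\Lambda)\bigr)\,ds$, and since $e(T) = |p_1(T) - p_2(T)| \le \delta_p$, Grönwall's inequality gives $e(t) \le C_{18} e^{C_{19}(t-T)}(\delta_p + \delta_\Lambda)$, which is the claim. One still has to fix $\gamma$ small enough that: (i) the solutions $f_i$ remain regular and the free boundaries $p_i$ remain in the interval $I_2 \Subset I$ where Lemma \ref{lem:heat_lq} applies; (ii) the transaction rates $\Lambda_i(t)$ stay bounded below; (iii) the shifted points $p_i(s)\pm a$ stay at distance $\gtrsim a/2$ from $p_i(t)$ for all $T \le s \le t \le T+\gamma$. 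This is possible by continuity of $p_i$ and $\Lambda_i$ together with Assumption \ref{a:pbounds} and the local existence theory.

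The main obstacle is the control of the source-term contribution to $p_1'(t) - p_2'(t)$: one is comparing two free-boundary problems with \emph{different} singular right-hand sides, and one must make rigorous that differentiating the implicit relation $f_i(p_i(t),t)=0$ and then subtracting produces only the Lipschitz-in-$p$ and $\delta_\Lambda$ contributions claimed, rather than something worse coming from the interaction of the moving singularities $\delta_{p_i(t)\pm a}$ with the moving evaluation point $p_i(t)$. Justifying that this is legitimate is precisely why Assumptions \ref{a:deltabound}–\ref{a:normest} and the smallness of $\gamma$ are needed — they guarantee a fixed gap between the evaluation point and the singular shifts, so that all the relevant kernel evaluations land in a region where $K_N$ and its derivatives are smooth, bounded, and Lipschitz.
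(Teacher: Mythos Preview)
Your overall architecture --- Duhamel representation, differentiating $f_i(p_i(t),t)=0$, Lemma~\ref{lem:heat_lq} for the initial-data contribution, smallness of $\gamma$ to keep kernel evaluations away from singularities, then Gr\"onwall --- is exactly the paper's. But there is a genuine gap in how you handle the source terms.

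You write that for the source-term contribution you will use $\|\Lambda_1 - \Lambda_2\|_{L^2} \le \delta_\Lambda$. That bound is Assumption~\ref{a:plbounds}, and it holds only on the \emph{measurement} interval $[\eps,T]$. On the prediction interval $[T,T+\gamma]$ the transaction rates $\Lambda_i(t) = -\partial_x f_i(p_i(t),t)$ are determined by the forward evolution from the reconstructed data $f_{i,0}$, and nothing a~priori controls their difference. Since your estimate of $p_1'(t)-p_2'(t)$ contains $\Lambda_i(s)$ for $s\in[T,t]$ in the Duhamel source integral, you are using a bound you do not have.

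The paper closes this gap by a two-step argument. First it writes the Duhamel representation for $\Lambda_i(t)$ itself (equation~\eqref{eq:lambda_duhamel}), subtracts, and observes that $|\Lambda_1(t)-\Lambda_2(t)|$ is bounded by a data term of size $\delta_p+\delta_\Lambda$ (via Lemma~\ref{lem:heat_lq} and~\ref{a:normest}), a term proportional to $\|p_1-p_2\|_{C([T,T+\gamma])}$, and an integral $\int_T^t |\Lambda_1-\Lambda_2|\,d\tau$. Gr\"onwall on this gives the intermediate estimate~\eqref{eq:estimate_gronwall_lambda} for $|\Lambda_1(t)-\Lambda_2(t)|$. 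Only then does the paper subtract the relations $\dot p_i \Lambda_i + \partial_t f_i(p_i,t)=0$, integrate in time, and insert the $\Lambda$-estimate to conclude. Your single Gr\"onwall for $e(t)=|p_1(t)-p_2(t)|$ alone cannot close unless you either first control $\Lambda_1-\Lambda_2$ in this way or set up a coupled Gr\"onwall for the pair $(|p_1-p_2|,|\Lambda_1-\Lambda_2|)$ --- neither of which appears in your proposal.
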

Note that unfortunately, we cannot give a lower bound on the quantity $\gamma$ as it depends in a non-linear and non-local fashion on the initial datum via the solution of the equation. However, the proof below shows that as the transaction rate $\Lambda$ increases, also $\gamma$ becomes larger which agrees with the modelling.
\begin{proof} 
Due to assumption \ref{a:highreg} we can invoke \cite{MMPW2009}[Lemma 2.5] to show that for $\gamma$ sufficiently small 
(depending on $f_i(x,T)$, $i=1,2$) the corresponding transaction rates $\Lambda_1,\, \Lambda_2$ are strictly positive 
on $[T,T+\gamma]$. Furthermore, \ref{a:highreg} implies that $p_1(t),\; p_2(t)$ are in $C^1([T,T+\gamma])$. Now Duhamel's formula allows 
us to express the solutions $f_i(x,t)$ to \eqref{e:ll} as 
\begin{align}
f_i(x,t) = K_N^T \ast f_{i,0} + \int_T^t \Lambda_i(\tau) [K_N^T(x-p_i(\tau)+a,t-\tau)-K_N^T(x-p_i(\tau)-a,t-\tau)]\;d\tau
\end{align}
Taking the space derivative and evaluating at $x=p_i(t)$ we obtain
\begin{align}\label{eq:lambda_duhamel}
\begin{aligned}
\Lambda_i(t) &= \partial_x f_i(p(t),t) = (\partial_x K_N^T) \ast f_{i,0}(p(t),t) \\
&+ \int_T^t \Lambda_i(\tau) [\partial_x K_N^T(p_i(t)-p_i(\tau)+a,t-\tau)-\partial_x K_N^T(p_i(t)-p_i(\tau)-a,t-\tau)]\;d\tau.
\end{aligned}
\end{align}
Subtracting \eqref{eq:lambda_duhamel} for $i=1,2$ and using the linearity of the convolution, we obtain
\begin{align}\label{eq:rate_duhamel}
\begin{split}
\Lambda_1(t) - \Lambda_2(t) &= (\partial_x K_N^T) \ast (f_{2,0}(p_2(t),t) - (\partial_x K_N^T  \ast (f_{2,0}(p_1(t),t) \\
& \quad + (\partial_x K_N^T) \ast (f_{2,0}-f_{1,0})(p_1(t),t)\\
& \quad + \int_0^t \left(\Lambda_1(\tau)- \Lambda_2(\tau)\right) \theta_1(t,\tau) + \Lambda_2(\tau)\left(\theta_1(t,\tau) - \theta_2(t,\tau)\right)\;d\tau,
\end{split}
\end{align}
with 
$\theta_i(t,\tau) = [\partial_x K_N^T(p_i(t)-p_i(\tau)+a,t-\tau)-\partial_x K_N(p_i(t)-p_i(\tau)-a,t-\tau)].$
As the $p_i(t)$ are continuous, choosing $\gamma$ sufficiently small guarantees that the derivatives of $K_N^T$ appearing in the definition of $\theta_i$ are always evaluated 
away from their singularity, in particular they are bounded and locally Lipschitz-continuous, which implies with the local Lipschitz constant $\lambda$ 
$$ | \theta_1(t,\tau) -\theta_2(t,\tau) | \leq \lambda \vert p_1(t)-p_1(\tau) - p_2(t) +p_2(\tau) \vert \leq 2 \lambda
\|p_1(t) - p_2(t)\|_{C([T,T+\gamma])}. $$
 Taking the absolute value on both sides of \eqref{eq:rate_duhamel} and using Lemma \ref{lem:heat_lq} implies
\begin{align*}
|\Lambda_1(t) - \Lambda_2(t)| &\le C_{20} \left(\|f_1^0 - f_2^0\|_{L^2((-L,L))}+\|f_1^0 - f_2^0\|_{H^4(I)}\right) + C_{21} \|p_1(t) - p_2(t)\|_{C([T,T+\gamma])} \\
&+ C_{22} \int_T^t |\Lambda_1(\tau)- \Lambda_2(\tau)|\;d\tau,
\end{align*}
so that Gronwall's lemma implies, together with (A3) and (A6), yields
\begin{align}\label{eq:estimate_gronwall_lambda}
|\Lambda_1(t) - \Lambda_2(t)| \le C_{23}(\delta_p + \delta_\Lambda)e^{C_{22}t}.
\end{align}
Next we exploit the fact that $f_i(p_i(t),t)=0$ by taking the time derivative, which gives
\begin{align*}
 0 = \frac{d}{dt} f_i(p_i(t)) = \dot p_i(t) \partial_x f_i(p_i(t),t) + \partial_t f_i(p_i(t),t),\quad i=1,2.
\end{align*}
Subtracting the above equation for $i=1$ and $i=2$ respectively, using the definition of $\Lambda_i$ and integrating in time we obtain, for $T \le t \le T+\gamma$
\begin{align}
\begin{split}
p_1( t) - p_2( t) =& \int_T^{ t}\left( \frac{\Lambda_2(s)\partial_t f_1(p_1(s),s)-\Lambda_1(s)\partial_t f_2(p_2(s),s)}{\Lambda_1(s)\Lambda_2(s)}\right)\;ds \\
 & + (p_1(T) - p_2(T))
 \end{split}
\end{align}
Denoting by $\underline{\Lambda} = \inf_{T \le s \le T+\gamma} \Lambda_1(s)\Lambda_2(s)$ and using (A3) this yields
\begin{align*}
 |p_1( t) - p_2( t)| &\le \frac{1}{\underline \Lambda } \int_T^{ t} |(\Lambda_2(s)-\Lambda_1(s))\partial_t f_1(p_1(s),s)| \;ds\\
 &+\frac{1}{\underline \Lambda } \int_T^{ t} |\Lambda_1(s)(\partial_t f_1(p_1(s),s) - \partial_t f_2(p_1(s),s))|\;ds \\
 &+\frac{1}{\underline \Lambda } \int_T^{ t} |\Lambda_1(s)(\partial_t f_2(p_1(s),s) - \partial_t f_2(p_2(s),s))|\;ds + \delta_p.
\end{align*} 
As a consequence of \eqref{eq:localisation}, $\partial_t f_i(\cdot, t)$ is bounded and Lipschitz continuous. 
Thus using \eqref{eq:estimate_gronwall_lambda}, \ref{a:plbounds} and once more \eqref{eq:localisation} applied to $\partial_t f_1(p_1(t),t) - \partial_t f_2(p_1(t),t)$ (and together with \ref{a:normest}) finally yields the assertion.
\end{proof}
\section{Numerical Simulation}\label{sec:numerics}
We conclude by illustrating the proposed methodologies and confirming the
obtained analytic results with various computational experiments. All simulations are
performed on
the domain $[-L,L]$, which is split into $N$ intervals of length $h$. The
discrete grid points are denoted by $x_i = i h$. We compute solutions at
discrete times $t^k = k\Delta t$, where $\Delta t$ is the discrete time
step. However we will omit all full time-discrete expressions in the
following, to enhance readability.\\

The reconstruction of the
buyer-vendor distribution is based on piecewise linear basis functions. 
Let $V_h$ denote the space of piecewise linear basis functions $\phi_j$, which
satisfy $\phi_j(x_i) = \delta_{ij}$. We wish to reconstruct $\hat{f} \in
V_h$, which is given by
 $\hat{f}(x,T) = \sum_{j=1}^J \hat{f}_j \phi_j(x)$
using the duality estimates \eqref{e:duality}.\\

\textit{Data generation:} We solve the transformed LL model \eqref{eq:heat}
for a given initial buyer-vendor distribution $f_0$. In doing so we transform the
initial distribution $f_0$ via \eqref{e:trans}, and compute the solution to the heat
equation \eqref{eq:heat} using an implicit in time discretization. The
returned discrete price $p^{(k)} = p(t^k)$ corresponds to the
zero levelset of the buyer-vendor distribution $F^{(k)} = F(t^k)$ (computed via linear
interpolation). Note that we use a
finer spatial and temporal discretization to generate the data than in the subsequent
reconstruction. \\

\textit{Steepest descent:} We solve \eqref{eq:minreg1} and the corresponding problem on
$\Omega_\triangleright$ and $\Omega_\triangleleft$ using steepest descent. In doing so, we compute the variational
derivatives of \eqref{eq:lagrange} and obtain the first order optimality
system \eqref{eq:heat_opt} as well as the updates for the controls $u_1$ and
$u_2$. The detailed steps are outlined in the While-Loop of
Algorithm \ref{a:recon}. Here the parameter $\beta^{(l)}>0$ is the step size of the
steepest descent update. We use the Armijo-Goldstein condition
to adjust $\beta^{(l)}$ in the search direction $\mathbf{p}$. We recall that the Armijo-Goldstein condition for a general
functional $\mathcal{J}$ is given by
\begin{align}\label{eq:goldstein}
\mathcal{J}(\mathbf{x} + \beta \mathbf{p})  \leq \mathcal{J}(\mathbf{x}) + \beta \gamma \nabla \mathcal{J}(\mathbf{x})^T \mathbf{p}.
\end{align}
where $\gamma \in (0,1]$. The starting value is set to $\beta^{(0)} = 0.25$, which is then
reduced (up to a maximum of four times) by a factor of $\frac{1}{2}$ until condition \eqref{eq:goldstein} is satisfied.
Note that we transform the computational domains $\Omega_\triangleleft$ and $\Omega_\triangleright$ to $[0,1]$ as 
discussed in Section \ref{s:evolspace} in all simulations. We solve the
forward as well as the adjoint equations using an implicit in time
discretization and piecewise linear basis functions in space.

{\SetAlgoNoLine
  \begin{algorithm2e} 
    \DontPrintSemicolon
    \kwGiven{Price $p_i$ and transaction rate $\lambda_i$ at discrete times  $t_i = i \Delta t$}
    \For{$i = 1\ldots J$}{
      \If{$x_j < p(T)$ where $\phi_i(x_j) = \delta_{ij}$ }{$\psi_1(x) = \phi_i(x)$}
      \Else{$\psi_2(x) = \phi_i(x)$}
      k = 0\;
      \While{$k < $ max. iterations and convergence criterion is not satisfied}{    
        Adjoint equ.: Given $u_1^{i,k}(t)$, $u_2^{i,k}(t)$, $\psi_1(x)$ and $\psi_2(x)$ solve  
        \begin{align*}
          &\partial_t \Phi_1^{i,k}(x,t) + \partial_{xx} \Phi_1^{i,k}(x,t) = 0 &&\partial_t \Phi_2^{i,k}(x,t) + \partial_{xx} \Phi_2^{i,k}(x,t) = 0 \\
          &\partial_x \Phi_1^{i,k}(- L,t ) = 0  &&\partial_x \Phi_2^{i,k}(L,t ) = 0\\
          &\Phi_1^{i,k}(p(t),t) = u^{i,k}_1(t)  &&\Phi_2^{i,k}(p(t),t) = u^{i,k}_2(t)\\
          &\Phi_1^{i,k}(x,T) = \psi_1(x)  &&\Phi_2^{i,k}(x,T) = \psi_2(x)
        \end{align*}
        \indent Forward equ.: For $G_1^{i,k}(x,0) = -\Phi_1^{i,k}(x,0)$ and $G_2^{i,k}(x,t) = -\Phi_2^{i,k}(x,0)$ solve
        \begin{align*}
          -&\partial_t G_1^{i,k}(x,t) + \partial_{xx} G_1^{i,k}(x,t)= 0  &&-\partial_t G_2^{i,k}(x,t) + \partial_{xx} G_2^{i,k}(x,t)= 0\\
          &\partial_x G_1^{i,k}(-L,t) = 0 &&\phantom{--}\partial_x G_2^{i,k}(L,t) = 0 \\
          & G_1^{i,k}(p(t),t) = 0 && \phantom{--}G_2^{i,k}(p(t),t) = 0  
        \end{align*}
        \indent    Update controls $u_1^{i,k} = u_1(t)$ and $u_2^{i,k} = u_2(t)$ using a step size $\beta^{(l)}$, which satisfies \eqref{eq:goldstein}:
        \begin{align*}
          &u_1^{i,k+1}(t) = u_1^{i,k}(t) - \beta^{(l)} \left(\alpha u_1^{i,k}(t) + \partial_x G_1^{i,k}(p(t),t)\right)\\
          &u_2^{i,k+1}(t) = u_2^{i,k}(t)  - \beta^{(l)} \left(\alpha u_2^{i,k}(t) - \partial_x G_2^{i,k}(p(t),t)\right).
        \end{align*}
        k = k+1\;
      }
    }
    Reconstruct solution $\hat{f}(x) = \sum_j \hat{f}_j \phi_j(x) $:
    \begin{align*}
      &\sum_j\int_{-L}^{p(T)} \hat{f}_j \phi_j(x) \phi_i(x) dx = \int_{-L}^{p(\eps)} f(x,0) \Phi_1^i(x,0) dx + \int_\eps^T \Lambda(t)(\Phi_1^i(p(t)-a) - u^i_1(t)) dt\\
      &\sum_j \int_{p(T)}^L \hat{f}_j \phi_j(x) \phi_i(x) dx = \int_{p(\eps)}^{L} f(x,0) \Phi_2^i(x,0) dx + \int_\eps^T \Lambda(t)(u^i_2(t) - \Phi_2^i(p(t)-a)) dt
    \end{align*}
    \caption{Reconstruction of $\hat{f}(x,T)$.}
    \label{a:recon}
  \end{algorithm2e}
}

\subsubsection*{Identifiabilty for different initial conditions.} In the first
experiment we set $L = 0.5$ and the final time to $T=0.25$. We
split the spatial domain $[-0.5,0.5]$ into $200$ elements and the time
interval $[0,0.25]$ into $125$ time steps. The initial datum is set to
\begin{align}\label{eq:init1}
  f_0(x) = (x+0.75)(x-0.65)(x-0.05).
\end{align}
We approximate the final buyer-vendor distribution using $J=50$ basis
functions. Furthermore we choose the following  parameters
\begin{align*}
\alpha = 0.1, ~\beta^{(0)}=0.25, \gamma = 0.2, \textrm{max. iterations} = 250 \textrm{ and max
  error} = 10^{-5}. 
\end{align*}  
Figure \ref{f:mon_price} shows the reconstructed and computed function $F$ (the
latter computed by solving the heat equation \eqref{eq:heat} with the
transformed initial datum $F_0$). We observe a good agreement, with small
artefacts at the boundary and the buyer-vendor interface. The corresponding controls are shown in Figure \ref{f:mon_controls}.
\begin{figure}
\centering
 \subfigure[Price dynamics]{\includegraphics[width=0.45\textwidth]{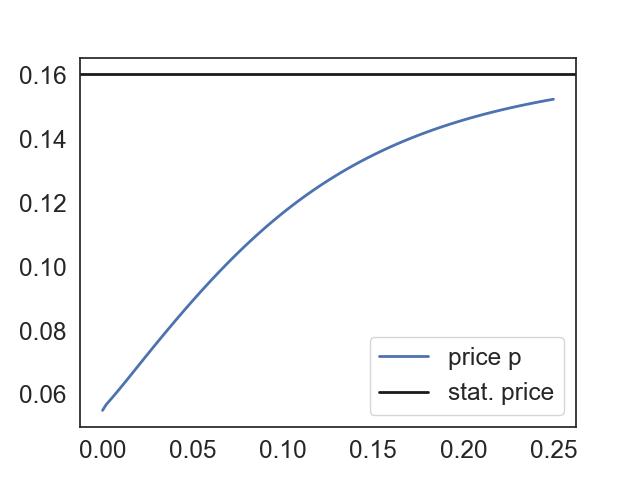}}
 \subfigure[Reconstruction]{\includegraphics[width=0.45\textwidth]{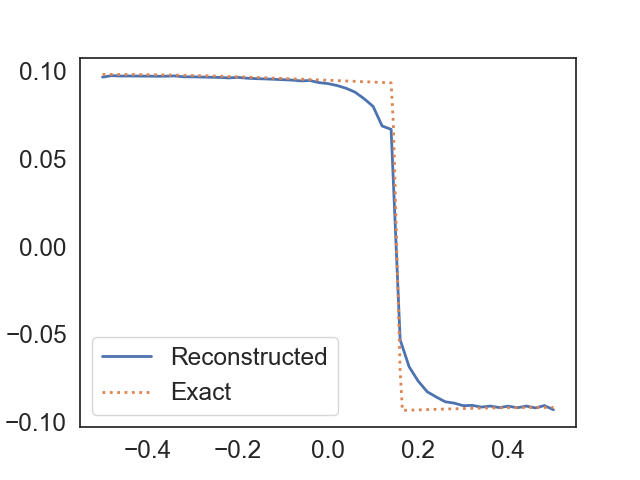}}
 \caption{Left: Evolution of the price $p=p(t)$; Right: Reconstructed and computed buyer-vendor distributions $F$.}
 \label{f:mon_price}
\end{figure}

\begin{figure}
\centering	
 \subfigure[Controls $u_1$]{\includegraphics[width=0.425\textwidth]{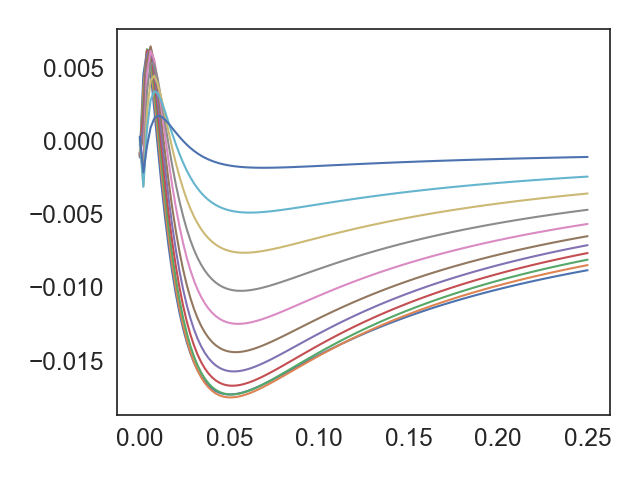}}
 \subfigure[Controls $u_2$]{\includegraphics[width=0.425\textwidth]{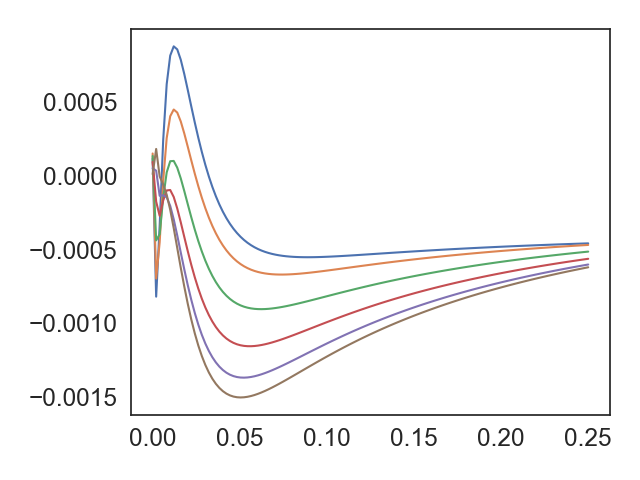}}
 \caption{Evolution of the controls $u_1$ and $u_2$ (plotted at every $15^{\text{th}}$ timestep).}
 \label{f:mon_controls}
\end{figure}

\noindent Next we choose a slightly different initial datum, in particular
\begin{align*}
 f_0(x) = (x+0.75)(x-0.65)(x+0.05)
\end{align*}
In this case the price is not monotone, see Figure \ref{f:non_mon_price}. However, the quality of the reconstructions is comparable to the one of
the previous example.
\begin{figure}
\centering	
 \subfigure[Price dynamics]{\includegraphics[width=0.45\textwidth]{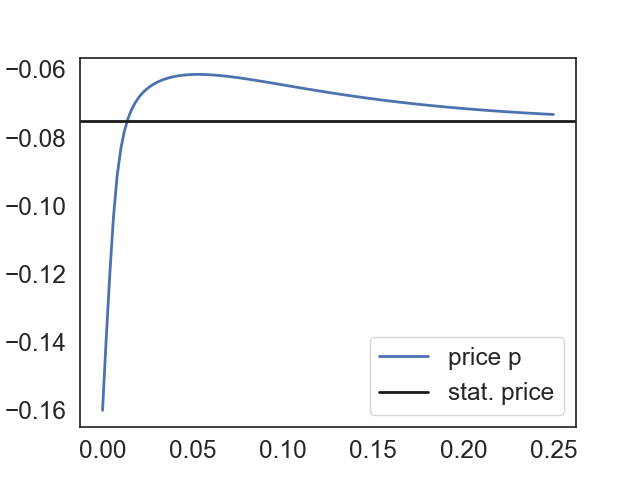}}
 \subfigure[Reconstruction]{\includegraphics[width=0.45\textwidth]{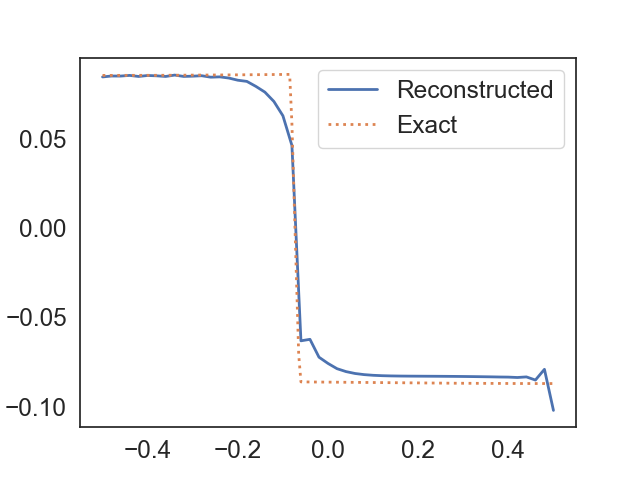}}
  \caption{Left: Evolution of the price $p=p(t)$; Right: Reconstructed and computed buyer-vendor distributions $F$.}
 \label{f:non_mon_price}
\end{figure}

\subsubsection*{Stability of $\hat{f}(x,T)$} 
Next we are interested in the stability of the reconstruction $\hat{f}(x,T)$ with respect to perturbations in the price. Lemma \ref{lem:stab_initial} and in particular estimate \eqref{e:stabf_delta}
state that the difference in the reconstructions is bounded by the difference in the prices and transaction rates. We consider the following perturbation of the unperturbed price 
$p_{\delta,0}$:
\begin{align*}
 p_{\delta,k} = p_{\delta,0} + k \delta \sin(\pi t) \text{ for } k=1, \ldots, K.
\end{align*}
Note that the perturbed price is still in $C^1$ and that $p_{\delta,0}(0) = p_{\delta,k}(0)$ for all $k=1, \ldots K$. 
We start with the same the initial datum as in the first example, that is \eqref{eq:init1}, and set $\delta = 0.01$ and $K=13$. 
All other parameters are the same as in the first example, except that we reconstruct the final profile using $J = 80$ basis functions.
Figure \ref{f:diffdelta} illustrates the linear increase of the error in the controls and the reconstruction as the noise level increases. For larger values of $\delta$ this agrees with the theoretical results of Lemma \ref{lem:stab_u} and Lemma \ref{lem:stab_initial}, respectively. For small $\delta$ a saturation effect due to the numerical discretization error occurs.

\begin{figure}
 \centering
 \subfigure[Difference in the $L^2$ norm of the controls]{\includegraphics[width=0.5\textwidth]{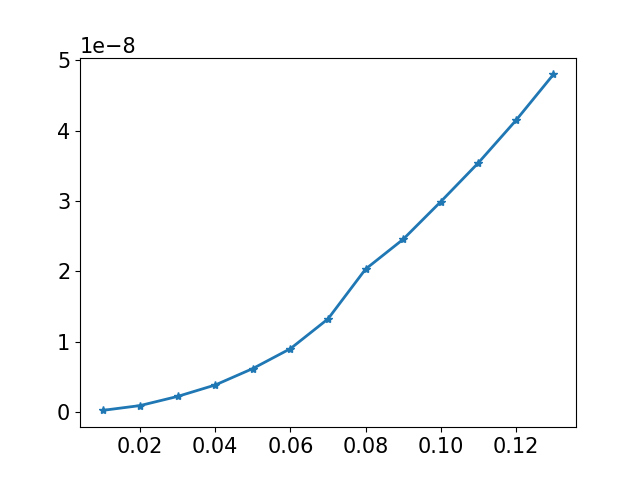}}
 \subfigure[Difference in the $C^1$ norm of the reconstructions]{\includegraphics[width=0.45\textwidth]{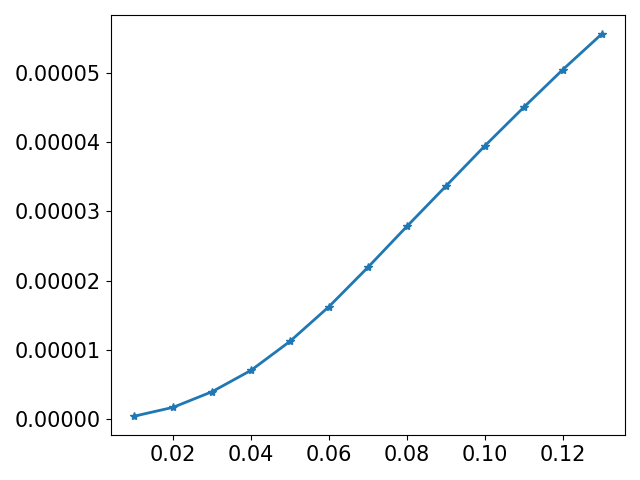}}
 \caption{Difference in the controls and the reconstructions for different values of $\delta$.}\label{f:diffdelta}
\end{figure}

\subsubsection*{Predicting price dynamics}
We conclude with an example where we use the reconstructed buyer-vendor distribution $f(x,T)$ to estimate future price dynamics and illustrate the influence of noise in those.
We consider an initial datum of the form
\begin{align*}
 f_0(x)= \sin(2 \pi x),
\end{align*}
on the domain $\Omega = [-0.5,0.5]$, that is $L = 0.5$. The computational domain is split into $100$ intervals, the time horizon $[0,0.5]$ into $100$ time steps. Since the number
of buyers equals the number of vendors the exact price is constant and equal to zero. The computed price curve, that is the dark blue line in Figure \ref{f:pred_price}, converges 
quickly to the true equilibrium price (being slightly off at the beginning due to numerical errors caused by the nonlinear transformation of the initial datum).\\
We then use the original price curve on the time interval $[0,0.5]$, denoted by $p_{\delta,0}$, as well as the perturbed prices
\begin{align*}
 p_{\delta, k} = p_{\delta, 0} + \delta_0 \sin(4 k \pi t)
\end{align*}
to reconstruct the solution at time $t=0.5$, that is $f(x,T=0.5)$. Note that the sinusoidal perturbations ensure that the final distribution of buyers and vendors equals the initial
distribution. These reconstructed buyer-vendor profiles are then used to compute the future price dynamics. We observe a jump in the price in all prices at time $t=0.5$, which 
is caused by the numerical error made in the reconstruction. However, the price then converges quickly to an equilibrium value, which is close to the true price (dark blue line).\\

\begin{figure}
 \centering
 \includegraphics[width=0.6\textwidth]{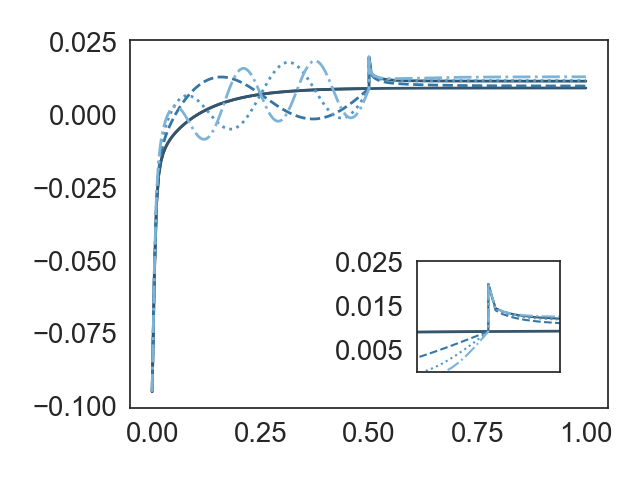}
 \caption{Influence of the perturbations on future price dynamics. The dark blue line (solid) shows the unperturbed evolution of the price up to time $t=1$. The lighter blue lines (dashed) correspond to different perturbations up to time $t=0.5$. A time $t=0.5$, the buyer vendor distribution is reconstructed and then used to calculate the future price dynamics. The inset shows the jump in the price caused by initialising the simulation using the reconstructed buyer-vendor distribution at time $t=0.5$}\label{f:pred_price}
\end{figure}

\noindent The parameters used in the reconstruction Algorithm \ref{a:recon} are set to
\begin{align*}
 \alpha = 0.05, ~\gamma = 0.1, \textrm{ max iterations } 250,\textrm{ and max error } 10^{-5}. 
\end{align*}
The final buyer vendor distribution $f(x,T=0.5)$ was computed using $J=80$ basis functions. 

\section{Summary and Outlook}
We studied a data assimilation problem for a parabolic nonlinear free boundary problem. This partial differential equation describes the 
evolution of the price, that is the free boundary, in a large economic market. We developed an analytical and computational framework for 
the corresponding data assimilation problem,  which is based on a previous work by Puel et al., see \cite{Puel2002}. The free boundary splits the original problem into two 
parts, each of them defining a separate optimal control problem. We discussed analytic properties of the respective problems and derived
stability estimates for the controls and reconstructed unknown buyer-vendor distribution in the presence of noise. Finally we confirmed and illustrated our
results with computational experiments.\\
We believe that the developed framework provides the basis for more general data assimilation problems in price formation. In \cite{BCMW2013} Burger et al. considered a Boltzmann type price formation
model, which allows for more complex trading mechanisms.  This problem is a system of nonlocal reaction-diffusion equations on the whole domain, where multiple prices (even with continuous distribution) and transaction rates can appear. Analogous questions can be asked for this problem if only the expectation of the price is to be predicted, but the problem could also be extended to a stochastic distribution of the price.

\section*{Acknowledgements}
MB acknowledges support by ERC via Grant EU FP 7 - ERC Consolidator Grant
615216 LifeInverse. MTW acknowledges financial support from the Austrian
Academy of Sciences \"OAW via the New Frontiers Grant NST-001 and the EPSRC
via the First Grant EP/P01240X/1.

\bibliographystyle{plain}
\bibliography{priceinv}

\end{document}